\documentclass[11pt]{article}
\usepackage{latexsym}

\usepackage[hidelinks]{hyperref}

\usepackage{amsmath}
\usepackage{amsthm}
\usepackage{amssymb}
\usepackage{mathrsfs}
\usepackage{wasysym,color}
\usepackage{graphicx}
\usepackage[title]{appendix}
\usepackage{comment}
\usepackage{bm}
\usepackage{amsfonts}
\usepackage{graphicx}   
\usepackage{relsize}    
\usepackage{float}

\def\c\delta{\mathcal{\delta}}

\newtheorem{theorem}{Theorem}

\newtheorem{lemma}[theorem]{Lemma}

\theoremstyle{definition}
\newtheorem{definition}[theorem]{Definition}

\allowdisplaybreaks[4]

\numberwithin{equation}{section}
\numberwithin{theorem}{section}

\begin{document}
\centerline{\Large\bf Mean-square attractors for non-autonomous Caputo }

\centerline{\Large\bf fractional stochastic differential equations
\footnote{This work was supported by the National Natural Science Foundation of China under grant 12371198.}}

\vskip 6pt \centerline{{\bf Lijuan Zhang$^{1}$,}~~  ~~{\bf Jianhua Huang\footnote{Corresponding author: Jianhua Huang.\\
\indent
E-mail addresses: zhanglj@nau.edu.cn,  jhhuang32@nudt.edu.cn, wangyj@lzu.edu.cn},}
~~  ~~{\bf Yejuan Wang$^{2}$}} \vskip 3pt

\centerline{\footnotesize\it $^1$School of Mathematics, Nanjing Audit University, Nanjing 210000, China}
\centerline{\footnotesize\it $^{\dag}$College of Science, National University of Defense Technology, Changsha 410073, China}
\centerline{\footnotesize\it $^2$School of Mathematics and Statistics,
Lanzhou University, Lanzhou 730000, China}



\bigbreak \noindent{\bf Abstract}{
This paper investigates the existence of mean-square attractors for a class of non-autonomous Caputo fractional stochastic differential equations of order $\alpha\in (\frac{1}{2},1)$, with a driving system on a compact base space $P$ and tempered fractional noise.
We first construct a mean-square semi-dynamical system on $\mathfrak{C} \times P$ that carries a skew-product semi-flow structure,
where $\mathfrak{C}=C(\mathbb{R}^{+},
L^{2}(\Omega, \mathcal{F}; \mathbb{R}^d))$ denotes the space of continuous functions from $ \mathbb{R}^{+}$ into $L^2(\Omega, \mathcal{F}; \mathbb{R}^d)$.
A global forward attracting set is then established in the weak mean-square topology. Moreover, by endowing the function space  $\mathfrak{C}_{w}=C(\mathbb{R}^{+},
L^{2}_w(\Omega, \mathcal{F}; \mathbb{H}))$ with an appropriate topology that renders it complete, we show that the skew-product semi-flow possesses a bounded and closed mean-square attractor within  $\mathfrak{C}_{w} \times P$. It is worth emphasizing that completeness plays a crucial role here: without this property, the attractor need not exist.

\noindent{{\textbf{Keywords:}}
Mean-square attractors,
non-autonomous semi-dynamical system,
fractional stochastic differential
equation,
Caputo fractional substantial time derivative.}

\noindent{{\textbf{AMS Subject Classification.} }
26A33, 37H05, 37L55, 60H10, 60H30.}

\noindent{\section{Introduction}}

Fractional stochastic differential equations provide a unified mathematical framework for modeling anomalous diffusion phenomena subject to stochastic perturbations.
Stochastic differential equations with fractional substantial derivative possess non-local characteristics, which can effectively characterize the long-term dependence of the current state of a system on its historical trajectory (i.e., the power-law memory effect).
Systematic investigation of their dynamical evolution mechanisms can advance the theory of fractional stochastic dynamical systems and provide a theoretical support for non-Markovian and non-Gaussian stochastic phenomena in interdisciplinary fields.

The Caputo fractional substantial time derivative \cite{Podlubny} with order $\alpha$ and decay parameter $\varrho>0$ is defined by
\[
D^{\alpha,\varrho} f(t) = I^{\beta,\varrho} [D^{m,\varrho} f(t)], \quad \alpha = m - \beta,
\]
where $m = \lceil \alpha \rceil$ is the smallest integer greater than $\alpha$.  Here the operator $I^{\beta,\varrho}$ denotes the fractional substantial integral \cite{Chen-Deng-2015-1, Deng-Chen-2015-2} given by
\[
I^{\beta,\varrho} f(t) = \frac{1}{\Gamma(\beta)} \int_0^t (t - \tau)^{\beta-1} e^{-\varrho(t-\tau)} f(\tau) d\tau, \quad \beta > 0,
\]
where $\varrho$ may be either constant or a function of the underlying variables. The operator $D^{m, \varrho}$ is understood as the $m$-fold composition $D^{m, \varrho}=( \frac{\partial}{\partial t} + \varrho)^m$, that is, the operator $(\frac{\partial}{\partial t} + \varrho)$ applied successively $m$ times.

Mean-square attractors, introduced by Kloeden and Lorenz \cite{Kloeden-Lorenz} in 2012, provide a natural framework for capturing the long-term statistical behavior of stochastic systems in the mean-square sense. Wu and Kloeden \cite{Wu-Kloeden} established the existence of a random attractor for the mean-square random dynamical system generated by a stochastic delay equation. Subsequently,
Wang \cite{B-Wang} proved the existence of weak pullback mean random attractors for mean random dynamical systems in Bochner spaces.
Doan et al. \cite{Doan-Rasmussen-Kloeden} analyzed the dichotomy spectrum for linear mean-square random dynamical systems and showed that a change in the sign of the dichotomy spectrum implies a bifurcation from a trivial to a nontrivial mean-square random attractor.  Meanwhile, mean-square attractors for stochastic lattice systems were studied in \cite{Kloeden-Lorenz-2}, while the mean-square controllability and continuity of pullback random attractors with respect to the Hausdorff metric were investigated in \cite{Liu-Li-Tang}.

To date, the investigation of dynamical systems associated with Caputo fractional differential equations remains relatively limited.
To the best of our knowledge, Doan and Kloeden \cite{Doan-Kloeden} showed that an autonomous Caputo fractional differential equation in a finite-dimensional space generates a semi-dynamical system in the function space. In \cite{Doan-Kloeden-2022}, the attractor of an autonomous Caputo fractional differential equation in \( \mathbb{R}^d \) was considered, where the vector field possesses a certain triangular structure and satisfies both a smoothness condition and a dissipativity condition. Additionally, Doan and Kloeden \cite{Doan-Kloeden-2024} demonstrated that an autonomous Caputo fractional differential equation in \( \mathbb{R}^d \) has a global Caputo attractor in the space of continuous functions. For non-autonomous systems, Cui and Kloeden \cite{Cui-Kloeden-2024} proved that a non-autonomous Caputo fractional differential equation in \( \mathbb{R}^d \), with a driving system on a compact base space, generates a skew-product semi-flow; furthermore, this skew-product semi-flow was shown to admit a bounded, closed attractor.

As far as we know, the theory of dynamics for non-autonomous Caputo fractional stochastic differential equations  (FSDEs) remains unexplored. Motivated by both its practical value in effectively modeling real-world natural phenomena with random perturbations, and its academic necessity in filling the existing theoretical gap, the investigation of dynamics in this field is of great importance.
The main difficulty lies in that for Caputo FSDEs driven by tempered fractional Brownian motion, the non-Markovian and non-martingale properties of the random noise, together with the nonlocality of the Caputo fractional derivative, result in the evolution of solutions depending not only on the current state but also on the entire history of the system. Consequently, it becomes challenging to establish a semi-group in the autonomous case or a two-parameter semi-group in the non-autonomous case on $L^{2}(\Omega, \mathcal{F}; \mathbb{R}^d)$.

This paper aims to fill this gap by developing a systematic theory for mean-square dynamical systems governed by the following non-autonomous Caputo FSDE driven by tempered fractional Brownian motion:
\begin{align}\label{eq00.2-2}
D^{\alpha,\varrho} x(t) =g(x(t), \vartheta_t(p))+\frac{dB^{H,\lambda}(t)}
{dt},~~~t\geq0,
\end{align}
with initial values $x_{0}\in L^2(\Omega, \mathcal{F}_{0};\mathbb{R}^d)$ and $p_{0}\in P$.
Here $D^{\alpha,\varrho}$
denotes the Caputo fractional substantial derivative with order $\alpha\in(\frac{1}{2},1)$ and decay parameter $\varrho>0$;
$P$ is a compact base space, and
$\vartheta_t: P\rightarrow P$, $t\in \mathbb{R}$ forms a group of operators, i.e., an autonomous dynamical system.
The non-autonomous function $g: \mathbb{R}^d\times P \to \mathbb{R}^d$ is assumed to be Lipschitz continuous.
Moreover,
$B^{H,\lambda}(t)=(B^{H_{1},\lambda_{1}}_{1}(t),\ldots, B^{H_{d},\lambda_{d}}_{d}(t))^{\top}$ denotes a $d$-dimensional tempered fractional Brownian motion (TFBM) \cite{Meerschaert-Sabzikar-2013, Meerschaert-Sabzikar-2014} with Hurst index $H=(H_{1},\ldots, H_{d})$ and tempering parameter $\lambda=(\lambda_{1},\ldots, \lambda_{d})$. Each component $B^{H_{i},\lambda_{i}}_{i}(t)$,  $i=1,\ldots,d$, is an independent one-dimensional TFBM with Hurst parameter $H_{i}\in(\frac{1}{2},1)$ and tempering parameter $\lambda_{i}>0$.

The solution of the Caputo FSDE \eqref{eq00.2-2} admits the following integral representation:
\begin{align}
    x(t, x_0, p_0) = x_0 e^{-\varrho t}+ \int_0^t a(t, s) g(x(s), \vartheta_s(p_0))ds
    +\int_0^t a(t, s)dB^{H,\lambda}(s),\nonumber
\end{align}
where
\begin{align}\label{eq00.2-1}
a(t,s):=\frac{1}{\Gamma(\alpha)}(t-s)^{\alpha-1}e^{-\varrho (t-s)}
\end{align}
is the memory kernel associated with the Caputo fractional substantial derivative.
The positive parameter $\varrho$ governs the exponential decay rate of this kernel, thereby quantifying the fading influence of historical states on the current evolution.

The nonlocal nature of the Caputo FSDE \eqref{eq00.2-2} prevents the direct generation of a dynamical system on the state spaces $\mathbb{R}^{d}$ or $L^{2}(\Omega, \mathcal{F}; \mathbb{R}^{d})$. Accordingly, we construct a family of operators
$\{T_{t,\xi}\}_{t,\xi\in \mathbb{R}^+}$ on the continuous function space  $C(\mathbb{R}^+, L^{2}(\Omega, \mathcal{F}; \mathbb{R}^{d}))$.
By using the translation invariance of the Caputo kernel function $a(t, s)$, together with the stationarity and It\^{o} isometry of TFBM, we verify that
$\{T_{t,\xi}\}_{t,\xi\in \mathbb{R}^+}$ satisfies the cocycle property. In conjunction with the compact base space $P$, this allows us to prove that the non-autonomous Caputo FSDE \eqref{eq00.2-2} generates a mean-square semi-dynamical system $\{\Pi_{t,\xi}=
(T_{t,\xi}, \vartheta_t)\}_{t,\xi\in \mathbb{R}^+}$ with a skew-product semi-flow structure, as stated in Theorem \ref{thm1.1}.
Subsequently, we establish the uniform boundedness of solutions with respect to $p\in P$ (Theorem~\ref{thm1.2}), which in turn enables the construction of a global forward attracting set in the weak mean-square topology (Theorem \ref{thm1.3}). Finally, Theorem \ref{thm1.5} asserts that the mean-square semi-dynamical system $\{\Pi_{t,\xi}\}_{t,\xi\in \mathbb{R}^+}$
admits a bounded and closed \(\mathbb{D}\)-attractor \(\mathbb{A}\) on the weighted space \(\mathfrak{C}_{w} \times P\), where $\mathfrak{C}_{w}=C(\mathbb{R}^+, L^{2}_{w}(\Omega, \mathcal{F}; \mathbb{R}^{d}))$. Moreover, the projection of \(\mathbb{A}\) onto \(\mathfrak{C}_{w}\) forms a bounded and closed uniform \(\mathscr{D}\)-attractor \(\mathcal{A}_P\) for the cocycle $\{T_{t,\xi}\}_{t,\xi\in \mathbb{R}^+}$, where \(\mathscr{D}\) denotes the collection of bounded sets in \(\mathfrak{C}_{w}\) and \(\mathbb{D}=\{\mathcal{D}\times P:\mathcal{D}\in \mathscr{D}\)\}.

The rest of this article is organized as follows.
In the next section, we prove that the non-autonomous Caputo FSDE \eqref{eq00.2-2} generates a mean-square semi-dynamical system.
Section 3 is devoted to establishing the uniform boundedness of solutions and
constructing a global forward attracting set in the weak mean-square topology. Section 4 further addresses the existence of mean-square attractors associated with such mean-square semi-dynamical systems.

\noindent{\section{Mean-square semi-dynamical system}}

In this section, we study the existence of the mean-square semi-dynamical system generated by the following  Caputo FSDE:
\begin{align}\label{eq0.1}
D^{\alpha,\varrho} x(t) = g(x(t), \vartheta_t(p))+\frac{dB^{H,\lambda}(t)}{dt},~~~t>0,
\end{align}
with initial values $x_{0}\in L^2(\Omega, \mathcal{F}_{0};\mathbb{R}^d)$ and $p_{0}\in P$.
Here $D^{\alpha,\varrho}$
denotes the Caputo fractional substantial derivative of order $\alpha\in(\frac{1}{2},1)$ with decay parameter $\varrho>0$, $\vartheta_t: P\rightarrow P$, $t\in \mathbb{R}$, form an autonomous dynamical system on the compact base space $P$,
$g: \mathbb{R}^d\times P \to \mathbb{R}^d$ is taken to be Lipschitz continuous (see below for details),
the noise $B^{H,\lambda}(t)$ is a $d$-dimensional TFBM, whose components  $B^{H_{i},\lambda_{i}}_{i}(t)$ are independent one-dimensional TFBMs with Hurst parameters  $H_{i}\in(\frac{1}{2},1)$ and tempering parameters $\lambda_{i}>0$, $i=1,\ldots,d$.

The solution of the Caputo FSDE \eqref{eq0.1} with initial values \(x(0)=x_0 \in L^{2}(\Omega, \mathcal{F}_{0};\mathbb{R}^d)\) and \(p_0 \in P\) satisfies the integral equation
\begin{align}\label{eq0.2}
    x(t, x_0, p_0) = x_0 e^{-\varrho t}+ \int_0^t a(t, s) g(x(s), \vartheta_s(p_0))ds
    +\int_0^t a(t, s)dB^{H,\lambda}(s),
\end{align}
where $a(t, s):=\frac{1}{\Gamma(\alpha)}(t-s)^{\alpha-1}e^{-\varrho (t-s)}$.

We first impose the following assumptions throughout this paper, which can be further relaxed in the subsequent analysis.

\textbf{Assumption 1.} Let \((P, d_P)\) be a compact metric space and let \(\{ \vartheta_t \}_{t \in \mathbb{R}}\) be a group of continuous mappings \(\vartheta_t : P \to P\).

This assumption ensures that the driving system \(\{ \vartheta_t \}_{t \in \mathbb{R}}\) is invertible with
 \(\vartheta_t^{-1} = \vartheta_{-t}\), and that \(P\) is invariant under \(\vartheta_t\), i.e., \(\vartheta_t(P) = P\) for all \(t \in \mathbb{R}\).
Moreover, the compactness of \((P, d_P)\) implies that the metric $d_P$ is bounded; that is, there exists a constant $M>0$ such that $d_P(p, q)\leq  M$ for all $p, q\in P$.

\textbf{Assumption 2.}  There exists a positive constant \( L \) such that for all \( x, y \in L^{2}(\Omega, \mathcal{F};\mathbb{R}^d) \) and \( p, q \in P \),
\[
\mathbb{E}\|g(x, p) - g(y, q)\|^{2} \leq L \mathbb{E}\|x - y\|^{2} + L d^{2}_P(p, q).
\]

Assumptions 1-2 guarantee the existence and uniqueness of solutions  to \eqref{eq0.1}, which depend continuously on the initial value and \(p \in P\) over any finite time interval \([0, T]\). The proof is analogous to that of Theorems 2 and 3 in \cite{Doan-Kloeden} and is omitted here.

For each $t\geq 0$, let $L^{2}(\Omega, \mathcal{F}_t; \mathbb{R}^{d})$ denote the subspace of $L^{2}(\Omega, \mathcal{F}; \mathbb{R}^{d})$ consisting of all $\mathcal{F}_t$-measurable functions.
Let
\[\mathfrak{C} := C(\mathbb{R}^+, L^{2}(\Omega, \mathcal{F}; \mathbb{R}^{d}))\] be the space of continuous functions from $\mathbb{R}^+$ into $L^{2}(\Omega, \mathcal{F}; \mathbb{R}^{d})$.
Correspondingly, for each $t\geq0$, define \[\mathfrak{C}_{t} := C(\mathbb{R}^+, L^{2}(\Omega, \mathcal{F}_{t}; \mathbb{R}^{d})),\]
namely, the space of continuous functions \(f : \mathbb{R}^+ \to L^{2}(\Omega, \mathcal{F}_{t}; \mathbb{R}^{d})\).

For \( p_0 \in P \) and \( f \in \mathfrak{C} \), let $x_{fe^{\varrho\cdot}}(t)$ denote the solution of the following stochastic Volterra integral equation
\begin{align}\label{eq0.3}
x_{fe^{\varrho\cdot}}(t) = f(t) + \int_0^t a(t, s)g(x_{fe^{\varrho\cdot}}(s), \vartheta_s(p_0))ds+\int_0^t a(t, s)dB^{H,\lambda}(s).
\end{align}
Note that \eqref{eq0.3} reduces to the integral equation \eqref{eq0.2} when $f=x_{0}e^{-\varrho\cdot}$.

For each $p\in P$ and $\tau,\xi\geq 0$, an operator
\[T_{\tau,\xi}(\cdot, p):\mathfrak{C}_{\xi}\to \mathfrak{C}_{\xi+\tau}\]
is defined as follows: for $f\in \mathfrak{C}_{\xi}$, the image $(T_{\tau,\xi}(f, p))(\cdot)$ is given, for $\theta\geq0$, by
\begin{align}\label{eq3.1}
(T_{\tau,\xi}(f, p))(\theta)
&:= f(\tau + \theta) + \int_0^\tau a(\tau + \theta, s)g(x_{fe^{\varrho\cdot}}(s), \vartheta_s(p))ds
\nonumber\\
&~~+\int_0^\tau a(\tau + \theta, s)dB^{H,\lambda}(s),
\end{align}
where \( x_{fe^{\varrho\cdot}}(\cdot) \) stands for the solution to the stochastic Volterra integral equation \eqref{eq0.3} associated with \( f \).
In addition, the operator
\[\Pi_{\tau,\xi}:\mathfrak{C}_{\xi} \times P \to \mathfrak{C}_{\tau+\xi} \times P\]
is defined by
\begin{align}\label{eq3.2}
\Pi_{\tau,\xi}(f, p) := (T_{\tau,\xi}(f, p), \vartheta_\tau(p)).
\end{align}

In what follows, we show that $\Pi=\{\Pi_{\tau,\xi}\}_{\tau,\xi\in \mathbb{R}^+}$ forms a semi-group on the space \(\mathfrak{C} \times P\) over $(\Omega, \mathcal{F}, \mathcal{F}_{t}, \mathbf{P})$, which possesses the structure of a skew-product semi-flow.

\begin{theorem}\label{thm1.1}
Let Assumptions 1-2 hold and $f\in \mathfrak{C}_{\xi}$ for some $\xi\geq0$. Then the integral equation \eqref{eq0.3} generates a semi-group of continuous operators $\Pi=\{\Pi_{\tau,\xi}\}_{\tau,\xi\in \mathbb{R}^+}$ on the space \( \mathfrak{C} \times P \) over $(\Omega, \mathcal{F}, \mathcal{F}_{t}, \mathbf{P})$, defined in  \eqref{eq3.2}, which has the structure of a skew-product semi-flow.
\end{theorem}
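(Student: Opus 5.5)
The argument has three parts. First, each $\Pi_{\tau,\xi}$ is well defined and continuous. Second, the trivial (initial-time) and cocycle identities hold. Third, these identities give the skew-product structure over the driving flow $\vartheta$.

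\textbf{Step 1: well-definedness and continuity.} Fix $f\in\mathfrak{C}_\xi$ and $p\in P$. The existence, uniqueness and finite-horizon continuous dependence stated after Assumption 2 give a unique continuous solution $x_{fe^{\varrho\cdot}}$ of \eqref{eq0.3}. We then check that $\theta\mapsto (T_{\tau,\xi}(f,p))(\theta)$ belongs to $C(\mathbb{R}^+,L^2(\Omega,\mathcal{F}_{\xi+\tau};\mathbb{R}^d))$.
\begin{itemize}
\item \emph{Measurability.} The drift and noise integrals only involve information up to time $\tau$.
\item \emph{Continuity in $\theta$.} For the drift integral we use dominated convergence: for $\theta>0$ the kernel $a(\tau+\theta,s)$ is bounded on $[0,\tau]$, and the mild singularity at $\theta=0$ is integrable since $\alpha>\tfrac12$. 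For the noise integral we use the It\^o-type isometry for TFBM with $H_i>\tfrac12$. It gives $\mathbb{E}|\int_0^\tau (a(\tau+\theta,s)-a(\tau+\theta',s))dB^{H_i,\lambda_i}_i|^2 \le C\int_0^\tau\int_0^\tau |\Delta a(u)||\Delta a(v)|\,|u-v|^{2H_i-2}du\,dv$, which tends to $0$.
\end{itemize}
Continuity of $(f,p)\mapsto T_{\tau,\xi}(f,p)$, uniformly on compact $\theta$-intervals, follows from Assumption 2 and a Gronwall-type inequality for the weakly singular kernel, applied to $\mathbb{E}\|x_{f}-x_{\tilde f}\|^2$ with the drift terms estimated via Cauchy--Schwarz using $\int_0^t a(t,s)^2ds<\infty$ (again $\alpha>\tfrac12$). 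Continuity in $p$ uses the continuity of $\vartheta_s$ together with the term $L d_P^2(\vartheta_s p,\vartheta_s q)$, uniformly on $[0,\tau]$ by compactness of $P$. Continuity of $\vartheta_\tau$ is given by Assumption 1.

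\textbf{Step 2: the identity and the cocycle property.} At $\tau=0$ the integrals vanish, so $T_{0,\xi}(f,p)=f$. The key identity to prove is
\[
T_{\tau+\sigma,\xi}(f,p)=T_{\sigma,\xi+\tau}\bigl(T_{\tau,\xi}(f,p),\vartheta_\tau p\bigr).
\]
Set $h=T_{\tau,\xi}(f,p)$. I would first show that $y(t):=x_{fe^{\varrho\cdot}}(t+\tau)$ solves \eqref{eq0.3} with $f$ replaced by $h$ and base point $\vartheta_\tau p$. To see this, split $\int_0^{t+\tau}$ into $\int_0^\tau+\int_\tau^{t+\tau}$. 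The first piece, together with $f(t+\tau)$, reproduces $h(t)$. In the second piece, the change of variables $s=\tau+r$ and the translation invariance $a(t+\tau,\tau+r)=a(t,r)$ turn the drift term into $\int_0^t a(t,r)g(y(r),\vartheta_r\vartheta_\tau p)dr$, using the group property $\vartheta_{r+\tau}=\vartheta_r\vartheta_\tau$.

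By uniqueness, $x_{he^{\varrho\cdot}}=y$. Substituting into \eqref{eq3.1} and recombining the integrals then yields the identity above. Together with $\vartheta_{\tau+\sigma}=\vartheta_\sigma\vartheta_\tau$, this gives $\Pi_{\tau+\sigma,\xi}=\Pi_{\sigma,\xi+\tau}\circ\Pi_{\tau,\xi}$, which is exactly the semigroup and skew-product semi-flow structure over $\vartheta$.

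\textbf{Main obstacle: the noise term.} After the shift, $\int_\tau^{t+\tau}a(t+\tau,s)\,dB^{H,\lambda}(s)$ is driven by the increments $B^{H,\lambda}(\tau+\cdot)-B^{H,\lambda}(\tau)$, not by $B^{H,\lambda}$ itself. TFBM is not Markov, so these pathwise do not coincide. I would handle this by interpreting the equality in the mean-square sense in which $\mathfrak{C}$ is used: by stationarity of TFBM increments, the shifted noise has the same law and covariance as the original. The It\^o isometry for the Wiener-type integral with deterministic kernel $a$ then shows that the two stochastic convolutions have identical second moments and cross-covariance structure. This is the sense in which the cocycle identity holds in $L^2$, or alternatively one uses the shifted noise in defining $T_{\sigma,\xi+\tau}$ on $\mathfrak{C}_{\xi+\tau}$. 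Making this identification precise and consistent with the filtration is, I expect, the delicate point. Everything else is a routine consequence of kernel translation invariance and uniqueness.
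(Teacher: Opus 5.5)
You follow the paper's argument almost line by line in Step 2: split at $\tau$, use $a(t+\tau,\tau+r)=a(t,r)$ and $\vartheta_{r+\tau}=\vartheta_r\vartheta_\tau$, invoke uniqueness, recombine. Your first way of handling the noise (stationarity plus the isometry, giving equal second moments) is exactly the isometry computation the paper uses to pass from $\int_\tau^{t+\tau}a(t+\tau,s)\,dB^{H,\lambda}(s)$ to $\int_0^t a(t,s)\,dB^{H,\lambda}(s)$.

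That step is a genuine gap, in the paper's proof as much as in yours. Equal second moments, or even equal laws, do not make two random variables the same element of $L^2(\Omega)$. Uniqueness for \eqref{eq0.3} refers to a fixed driving noise, and your $y(t)=x_{fe^{\varrho\cdot}}(t+\tau)$ solves the equation driven by $B^{H,\lambda}(\tau+\cdot)-B^{H,\lambda}(\tau)$, not the one defining $T_{\sigma,\xi+\tau}$.

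In fact, with $T_{\tau,\xi}$ defined as written (unshifted $B^{H,\lambda}$ for every $\xi$), the cocycle identity is false in $L^2$. Take $g\equiv 0$, which satisfies Assumption 2. Then
\[
T_{\sigma,\xi+\tau}\bigl(T_{\tau,\xi}(f,p),\vartheta_\tau p\bigr)(\theta)-T_{\tau+\sigma,\xi}(f,p)(\theta)=Y_0-Y_\tau,\qquad Y_\tau:=\int_0^\sigma a(\sigma+\theta,r)\,dB^{H,\lambda}(\tau+r),
\]
where $Y_\tau$ integrates in $r$ against $B^{H,\lambda}(\tau+\cdot)$. The isometry argument gives exactly $\mathbb{E}\|Y_\tau\|^2=\mathbb{E}\|Y_0\|^2$, hence $\mathbb{E}\|Y_0-Y_\tau\|^2=2\mathbb{E}\|Y_0\|^2-2\mathbb{E}\langle Y_0,Y_\tau\rangle$. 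The cross term $\sum_i\int_0^\sigma\int_0^\sigma a(\sigma+\theta,r)a(\sigma+\theta,u)\phi_{H_i,\lambda_i}(r,\tau+u)\,dr\,du$ tends to $0$ as $\tau\to\infty$, because of the factor $e^{-\lambda_i|t-s|}$ in $\phi_{H_i,\lambda_i}(s,t)$. So for large $\tau$ the two sides differ: the composition reuses the noise on $[0,\sigma]$ where it should use the noise on $[\tau,\tau+\sigma]$.

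The way out is your second option, but you must commit to it and present it as a change of definition, not an interpretation. Define both $x_{fe^{\varrho\cdot}}$ and $T_{\tau,\xi}(f,p)$ with $B^{H,\lambda}$ replaced by $B^{H,\lambda}_\xi:=B^{H,\lambda}(\xi+\cdot)-B^{H,\lambda}(\xi)$; for $\xi=0$ nothing changes. Then your Step 2 goes through verbatim and pathwise:
\begin{itemize}
\item $y$ solves the $(h,\vartheta_\tau p)$-equation driven by $B^{H,\lambda}_{\xi+\tau}$, which is precisely the equation defining $T_{\sigma,\xi+\tau}$;
\item uniqueness gives $x_{he^{\varrho\cdot}}=y$;
\item splitting $\int_0^{\tau+\sigma}=\int_0^\tau+\int_\tau^{\tau+\sigma}$ in both the drift and the noise integrals yields the cocycle identity exactly in $C(\mathbb{R}^+,L^2(\Omega,\mathcal{F}_{\xi+\tau+\sigma};\mathbb{R}^d))$.
\end{itemize}
No isometry or stationarity argument is needed; stationarity only says $B^{H,\lambda}_\xi$ has the same law as $B^{H,\lambda}$. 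This choice also makes $T_{\tau,\xi}:\mathfrak{C}_\xi\to\mathfrak{C}_{\xi+\tau}$ the natural adaptedness bookkeeping. The later estimates are unaffected, since they only use $T_{t,0}$. Be clear, though, that you are then proving the skew-product property for this modified $T$, not for the operator as literally defined in the paper.

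Your Step 1 fills in what the paper defers to Doan--Kloeden and is fine in outline. One correction: uniformity in $s\in[0,\tau]$ of $d_P(\vartheta_s p,\vartheta_s q)\to 0$ comes from joint continuity of $(s,p)\mapsto\vartheta_s p$, which Assumption 1 does not state, not from compactness of $P$.
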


\begin{proof}
The continuity of the operator $T=\{T_{\tau,\xi}\}_{\tau,\xi\in \mathbb{R}^+}$  follows by an argument analogous to that presented in \cite[Theorem 4]{Doan-Kloeden}, and its proof is therefore omitted. We now proceed to establish that the family $\{T_{\tau,\xi}\}_{\tau,\xi\in \mathbb{R}^+}$ satisfies the cocycle property with respect to the driving system $\{\vartheta_t\}_{t \in \mathbb{R}}$.
In view of \eqref{eq0.3}-\eqref{eq3.1} and $a(t, s):=\frac{1}{\Gamma(\alpha)}(t-s)^{\alpha-1}e^{-\varrho (t-s)}$, we see that
\begin{align}
&x_{fe^{\varrho\cdot}}(t + \tau) \nonumber\\
&= f(t + \tau) + \int_0^{t+\tau} a(t + \tau, s)g(x_{fe^{\varrho\cdot}}(s), \vartheta_s(p))ds+ \int_0^{t+\tau} a(t + \tau, s)dB^{H,\lambda}(s)
\nonumber\\
&= (T_{\tau,\xi}(f, p))(t) +
\int_{\tau}^{t+\tau}
a(t + \tau, s)g(x_{fe^{\varrho\cdot}}(s), \vartheta_s(p))ds
+\int_{\tau}^{t+\tau}
a(t + \tau, s)dB^{H,\lambda}(s)\nonumber\\
&= (T_{\tau,\xi}(f, p))(t) +
\int_0^t a(t, r)g(x_{fe^{\varrho\cdot}}(r + \tau), \vartheta_{r+\tau}(p))
dr+\int_{\tau}^{t+\tau}
a(t + \tau, s)dB^{H,\lambda}(s). \nonumber
\end{align}
By the It\^{o} isometry of TFBM \cite[(2.12)]{Zhang-Wang-Hu}, we have
\begin{align}\label{eq3.3}
&\mathbb{E}\bigg\|
\int_{\tau}^{t+\tau}
a(t + \tau, s)dB^{H,\lambda}(s)\bigg\|^{2}\nonumber\\
&=\sum_{i=1}^{d}\int_{\tau}^{t+\tau}\int_{\tau}^{t+\tau}
a(t + \tau, s)a(t + \tau, r)\phi_{H_{i},\lambda_{i}}(s,r)dsdr
\nonumber\\
&=\sum_{i=1}^{d}\int_{0}^{t}\int_{0}^{t}
a(t, s)a(t, r)\phi_{H_{i},\lambda_{i}}(s,r)dsdr
\nonumber\\
&=\mathbb{E}\bigg\|
\int_{0}^{t}
a(t, s)dB^{H,\lambda}(s)\bigg\|^{2},
\end{align}
where
\begin{align}\label{eq00.400}
\phi_{H_{i},\lambda_{i}}(s,t)
&= \left(H_{i} - \frac{1}{2}\right)^2 \int_0^\infty \tau^{H_{i} - \frac{3}{2}}(|t - s| + \tau)^{H_{i} - \frac{3}{2}}e^{-\lambda_{i}\tau - \lambda_{i}(|t - s| + \tau)}d\tau \nonumber\\
&- \lambda_{i}^2 \int_0^\infty \tau^{H_{i} - \frac{1}{2}}(|t - s| + \tau)^{H_{i} - \frac{1}{2}}e^{-\lambda_{i}\tau - \lambda_{i}(|t - s| + \tau)}d\tau.
\end{align}
This implies that
\begin{align}
x_{fe^{\varrho\cdot}}(t + \tau)
&=
(T_{\tau,\xi}(f, p))(t) +
\int_0^t a(t, r)g(x_{fe^{\varrho\cdot}}(r + \tau), \vartheta_{r+\tau}(p))
dr\nonumber\\
&+\int_{0}^{t}
a(t, s)dB^{H,\lambda}(s)
\end{align}
in $L^{2}(\Omega, \mathcal{F}_{\xi+t + \tau}; \mathbb{R}^{d})$.
From the existence and uniqueness of solutions, it follows that
$x_{fe^{\varrho\cdot}}(t + \tau) = \psi(t)$,
where \(\psi(t)\) is the unique solution of
\[
\psi(t) = (T_{\tau,\xi}(f,p))(t) + \int_0^t a(t,s)g(\psi(s), \vartheta_s(\vartheta_\tau(p)))ds+\int_{0}^{t}
a(t, s)dB^{H,\lambda}(s).
\]
In view of \eqref{eq3.1}, we have
\begin{align}
&(T_{\sigma, \tau+\xi}(T_{\tau, \xi}(f,p)))(\theta) \nonumber\\
&= (T_{\tau, \xi}(f,p))(\sigma + \theta) + \int_0^\sigma a(\sigma + \theta, s)g(\psi(s), \vartheta_s(\vartheta_\tau(p)))ds
+\int_0^\sigma a(\sigma + \theta, s)
dB^{H,\lambda}(s)\nonumber\\
&= f(\tau + \sigma + \theta) + \int_0^\tau a(\tau + \sigma + \theta, s)g(x_{fe^{\varrho\cdot}}(s), \vartheta_s(p))ds
+\int_0^\tau a(\tau + \sigma + \theta, s)
dB^{H,\lambda}(s)\nonumber\\
&+
\int_0^\sigma a(\sigma + \theta, s)g(\psi(s), \vartheta_s(\vartheta_\tau(p)))ds
+\int_0^\sigma a(\sigma + \theta, s)
dB^{H,\lambda}(s)\nonumber\\
&= f(\tau + \sigma + \theta) + \int_0^\tau a(\tau + \sigma + \theta, s)g(x_{fe^{\varrho\cdot}}(s), \vartheta_s(p))ds
+\int_0^\tau a(\tau + \sigma + \theta, s)
dB^{H,\lambda}(s)\nonumber\\
&+
\int_\tau^{\tau + \sigma} a(\sigma +\theta, r - \tau)g(\psi(r - \tau), \vartheta_r(p)) dr
+\int_0^\sigma a(\sigma + \theta, s)
dB^{H,\lambda}(s).\nonumber
\end{align}
Thanks to \(a(\sigma + \theta, r - \tau) = a(\tau + \sigma + \theta, r)\) and \(\psi(r - \tau) = x_{fe^{\varrho\cdot}}(r)\), it holds that
\begin{align}
(T_{\sigma, \tau+\xi}(T_{\tau, \xi}(f,p)))(\theta)
&= f(\tau + \sigma + \theta) + \int_0^{\tau+\sigma} a(\tau + \sigma + \theta, s)g(x_{fe^{\varrho\cdot}}(s), \vartheta_s(p))ds \nonumber\\
&+ \int_0^\tau a(\tau + \sigma + \theta, s)
dB^{H,\lambda}(s)+
\int_0^\sigma a(\sigma + \theta, s)
dB^{H,\lambda}(s).
\end{align}
On the other hand, using \eqref{eq3.1} yields
\begin{align}
(T_{\sigma+\tau,\xi}(f,p))(\theta) &=
f(\tau + \sigma + \theta)+
\int_0^{\tau+\sigma} a(\tau + \sigma + \theta, s)g(x_{fe^{\varrho\cdot}}(s), \vartheta_s(p))ds\nonumber\\
&~~~~+\int_0^{\tau+\sigma} a(\tau + \sigma + \theta, s)dB^{H,\lambda}(s).
\end{align}
By the similar arguments as in \eqref{eq3.3}, we find that for all $\sigma, \tau, \theta\geq 0$ and $f \in \mathfrak{C}_{\xi}$,
\[
(T_{\sigma+\tau, \xi}(f,p))(\theta) = (T_{\sigma, \tau+\xi}(T_{\tau,\xi}(f,p)))(\theta)
\]
in $L^{2}(\Omega, \mathcal{F}_{\xi+\sigma+\tau};\mathbb{R}^{d})$.
Since both sides are continuous in $\theta$, this pointwise identity in $L^{2}(\Omega, \mathcal{F}_{\xi+\sigma+\tau};\mathbb{R}^{d})$ extends to an identity in the space of continuous paths. Hence, the cocycle property
\[
T_{\sigma+\tau, \xi}(f,p) = T_{\sigma, \tau+\xi}(T_{\tau, \xi}(f,p)), \quad \forall \tau, \sigma \geq 0, \quad f \in \mathfrak{C}_{\xi},
\]
holds in \(C(\mathbb{R}^{+},
L^{2}(\Omega, \mathcal{F}_{\xi+\sigma+\tau}; \mathbb{R}^{d}))\).
Moreover, since \(\{ \vartheta_t \}_{t \in \mathbb{R}}\) forms a group of continuous mappings on the compact metric space \((P, d_P)\), the family $\Pi=\{\Pi_{\tau,\xi}\}_{\tau,\xi\in \mathbb{R}^+}$ inherits the analogous cocycle property, namely
\[
\Pi_{\sigma+\tau, \xi} = \Pi_{\sigma, \tau+\xi}\circ \Pi_{\tau, \xi}, \quad \forall \tau, \sigma \geq 0.\]
\end{proof}

In light of \((T_{t,0}(x_0e^{-\varrho\cdot}, p))(0) = x(t, x_0, p)\), the cocycle \(T\) will be used for the Caputo FSDE \eqref{eq0.1} with the specific choice \(f = x_0e^{-\varrho\cdot}\).
For this, we denote \(\mathscr{D}\) by
\begin{align}\label{eq00.400-7}
\mathscr{D} := \left\{ \mathcal{D} \subset \mathfrak{C} : \mathcal{D} = \bigcup_{x \in D} xe^{-\varrho\cdot} \text{ for some bounded set } D \subset L^{2}(\Omega, \mathcal{F}_{0};\mathbb{R}^d) \right\}.
\end{align}
Here a set \( D \) is said to be bounded in $L^{2}(\Omega, \mathcal{F}_{0}; \mathbb{R}^{d})$ if there exists a constant \( R > 0 \) such that \( \mathbb{E}\|x\|^2 < R^2 \) for all \( x \in D \).

\noindent{\section{Global forward attracting set}}

The following theorem establishes the uniform boundedness, with respect to both $p\in P$ and $t\geq0$, of the solution to the Caputo FSDE \eqref{eq0.1}.

\begin{theorem}\label{thm1.2}
Let $\alpha\in(\frac{1}{2},1)$, $H_{i}\in(\frac{1}{2},1),~i=1, \ldots, d$, and Assumptions 1-2 hold.
Then the solution of the Caputo FSDE \eqref{eq0.1} with initial values $x(0)=x_{0}\in L^{2}(\Omega, \mathcal{F}_{0}; \mathbb{R}^{d})$ and $p_{0}\in P$ satisfies
\begin{align}\label{eq3.3-1}
\mathbb{E}\|x(t, x_0, p_{0})\|^{2}
\leq (3e^{-\frac{\varrho}{2} t}\mathbb{E}\|x_0\|^{2}+M_{2})
\bigg[1+\sum_{n=1}^{\infty}\bigg(\frac{12L 2^{\alpha}
}{\varrho^{2\alpha}}\bigg)^{n}
\bigg],
\end{align}
where $M_{2}:=\frac{3}{\varrho^{2\alpha}}
(4L\mathbb{E}\|x_{0}\|^{2}
+2LM^{2}+2\mathbb{E}\|g(x_{0},p_{0})\|^{2})
+\sum_{i=1}^{d}
M_{\varrho, \alpha,H_{i}}$ and $\varrho$ is large enough to ensure the convergence of the above series.
\end{theorem}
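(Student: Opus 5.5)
Starting from the integral representation \eqref{eq0.2}, I will bound $\mathbb{E}\|x(t)\|^2$ by splitting it into three pieces with $(a+b+c)^2\le 3(a^2+b^2+c^2)$:
\[
\mathbb{E}\|x(t)\|^2\le 3e^{-2\varrho t}\mathbb{E}\|x_0\|^2+3\,\mathbb{E}\Big\|\int_0^t a(t,s)g(x(s),\vartheta_s(p_0))ds\Big\|^2+3\,\mathbb{E}\Big\|\int_0^t a(t,s)dB^{H,\lambda}(s)\Big\|^2 .
\]
The first term is at most $3e^{-\frac{\varrho}{2}t}\mathbb{E}\|x_0\|^2$. For the stochastic term I will use the It\^{o} isometry \eqref{eq3.3}. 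Since $H_i>\frac12$ and $\lambda_i>0$, the kernel $\phi_{H_i,\lambda_i}$ from \eqref{eq00.400} satisfies $|\phi_{H_i,\lambda_i}(s,r)|\le C|s-r|^{2H_i-2}$. Together with the factor $e^{-\varrho(2t-s-r)}$ this gives a bound independent of $t$:
\[
\int_0^t\int_0^t a(t,s)a(t,r)|\phi(s,r)|dsdr\le C\int_0^\infty\int_0^\infty u^{\alpha-1}v^{\alpha-1}e^{-\varrho(u+v)}|u-v|^{2H_i-2}dudv<\infty,
\]
which defines the constants $M_{\varrho,\alpha,H_i}$ (up to the factor $3$, which I absorb).

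For the drift term, I will first write $g(x(s),\vartheta_s(p_0))=[g(x(s),\vartheta_s(p_0))-g(x_0,p_0)]+g(x_0,p_0)$. By Assumption 2,
\[
\mathbb{E}\|g(x(s),\vartheta_s p_0)\|^2\le 2L\,\mathbb{E}\|x(s)-x_0\|^2+2LM^2+2\mathbb{E}\|g(x_0,p_0)\|^2\le 4L\,\mathbb{E}\|x(s)\|^2+4L\,\mathbb{E}\|x_0\|^2+2LM^2+2\mathbb{E}\|g(x_0,p_0)\|^2,
\]
where $M$ bounds $d_P$ by compactness (Assumption 1). Next I apply Cauchy--Schwarz, splitting the kernel as $a=a^{1/2}a^{1/2}$:
\[
\Big\|\int_0^t a\,g\Big\|^2\le \Big(\int_0^t a(t,s)ds\Big)\int_0^t a(t,s)\|g\|^2ds,
\]
with $\int_0^t a\le \varrho^{-\alpha}$. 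The constant part then contributes $\frac{3}{\varrho^{2\alpha}}(4L\mathbb{E}\|x_0\|^2+2LM^2+2\mathbb{E}\|g(x_0,p_0)\|^2)$, which together with the noise bound gives exactly $M_2$. The remaining part yields a linear Volterra inequality
\[
u(t)\le 3e^{-\frac{\varrho}{2}t}\mathbb{E}\|x_0\|^2+M_2+\frac{12L}{\varrho^{\alpha}}\int_0^t a(t,s)\,u(s)ds,\qquad u(t):=\mathbb{E}\|x(t)\|^2 .
\]

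The main obstacle is turning this into a bound that is uniform in $t$ and $p_0$, with the series in \eqref{eq3.3-1}. I will iterate the inequality (Picard/Neumann series). Set $\Phi(t)=3e^{-\frac{\varrho}{2}t}\mathbb{E}\|x_0\|^2+M_2$ and let $K$ be the operator $Ku(t)=\frac{12L}{\varrho^\alpha}\int_0^t a(t,s)u(s)ds$. I need to show $K^n\Phi\le (12L2^\alpha/\varrho^{2\alpha})^n\Phi$. The key one-step estimate, which I would check first, is
\[
\int_0^t a(t,s)e^{-\frac{\varrho}{2}s}ds=e^{-\frac{\varrho}{2}t}\frac{1}{\Gamma(\alpha)}\int_0^t(t-s)^{\alpha-1}e^{-\frac{\varrho}{2}(t-s)}ds\le e^{-\frac{\varrho}{2}t}\Big(\frac{2}{\varrho}\Big)^\alpha .
\]
Combined with $\int_0^t a\le\varrho^{-\alpha}\le(2/\varrho)^\alpha$ for the constant part, this gives $K\Phi\le \frac{12L2^\alpha}{\varrho^{2\alpha}}\Phi$. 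Inducting and using that $K$ is monotone gives the claim. The remainder $K^{n}u$ tends to $0$ because $u$ is bounded on $[0,t]$ by the existence theory and the ratio is less than $1$ when $\varrho$ is large. Summing over $n$ then yields \eqref{eq3.3-1}, and every constant is independent of $p_0$ and $t$.
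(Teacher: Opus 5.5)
Your proposal is correct and follows the paper's proof essentially step for step. It uses the same three-term splitting, the same Lipschitz-plus-Cauchy--Schwarz estimate producing $M_2$, and the same Volterra inequality $u(t)\le 3e^{-2\varrho t}\mathbb{E}\|x_0\|^2+M_2+\frac{12L}{\varrho^{\alpha}}\int_0^t a(t,s)u(s)\,ds$. The only difference is the last step: the paper multiplies by $e^{\varrho t}$ and invokes a Gronwall-type result (Corollary 2.3 of Wang--Xu--Kloeden), whereas you carry out the Neumann iteration directly with the tempered kernel, which makes explicit why the decay rate must be weakened to $\varrho/2$ and where the ratio $12L2^{\alpha}/\varrho^{2\alpha}$ comes from.
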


\begin{proof}
In view of \eqref{eq0.2} and the It\^{o} isometry of TFBM \cite[(2.12)]{Zhang-Wang-Hu}, we have
\begin{align}\label{eq3.4}
\mathbb{E}\|x(t, x_0, p_{0})\|^{2}
&\leq 3e^{-2\varrho t}\mathbb{E}\|x_0\|^{2}
+3\mathbb{E}\bigg\|
\int_0^t a(t, s) g(x(s), \vartheta_s(p_{0}))ds
\bigg\|^{2}
\nonumber\\
&+3\mathbb{E}\bigg\|
\int_0^t a(t, s)dB^{H,\lambda}(s)\bigg\|^{2}
\nonumber\\
&\leq3e^{-2\varrho t}\mathbb{E}\|x_0\|^{2}
+3\int_0^t a(t, s)ds\int_0^ta(t, s)\mathbb{E}\|
g(x(s), \vartheta_s(p_{0}))\|^{2}ds\nonumber\\
&
+3\sum_{i=1}^{d}\int_0^t\int_0^ta(t, s)a(t, r)\phi_{H_{i},\lambda_{i}}(s,r)dsdr,
\end{align}
where $\phi_{H_i,\lambda_i}$ is given by \eqref{eq00.400}.
According to Assumption 2, we estimate
\begin{align}\label{eq3.5}
&\int_0^ta(t, s)\mathbb{E}\|
g(x(s), \vartheta_s(p_{0}))\|^{2}ds
\nonumber\\
&\leq 2\int_0^ta(t, s)\mathbb{E}\|
g(x(s), \vartheta_s(p_{0}))-g(x_{0},p_{0})\|^{2}ds
+2\int_0^ta(t, s)\mathbb{E}\|g(x_{0},p_{0})\|^{2}ds\nonumber\\
&\leq 2L\int_0^ta(t, s)\mathbb{E}\|x(s)-x_{0}\|^{2}ds
+2L\int_0^ta(t, s)d_{P}^2(\vartheta_s(p_{0}),p_{0})ds
+2\mathbb{E}\|g(x_{0},p_{0})\|^{2}
\int_0^ta(t, s)ds\nonumber\\
&\leq 4L\int_0^ta(t, s)\mathbb{E}\|x(s)\|^{2}ds
+\frac{1}{\varrho^{\alpha}}
(4L\mathbb{E}\|x_{0}\|^{2}
+2LM^{2}+2\mathbb{E}\|g(x_{0},p_{0})\|^{2}).
\end{align}
Notice that $
\phi_{H_{i}, \lambda_{i}}(s, t) \leq (H_i - \frac{1}{2})^2 B(H_i - \frac{1}{2}, 2 - 2H_i) |t-s|^{2H_i-2}$ \cite{Zhang-Wang-Hu}, where $B(H_i - \frac{1}{2}, 2 - 2H_i)$ denotes the Beta function. It follows that
\begin{align}\label{eq3.6}
&\int_0^t\int_0^ta(t, s)a(t, r)\phi_{H_i,\lambda_{i}}(s,r)dsdr\nonumber\\
&\leq \frac{(H_i - \frac{1}{2})^2 B(H_i - \frac{1}{2}, 2 - 2H_i)
}{[\Gamma(\alpha)]^{2}}
\int_0^\infty\int_0^\infty
e^{-\varrho s}e^{-\varrho r}s^{\alpha-1}r^{\alpha-1}
|s-r|^{2H_i-2}dsdr\nonumber\\
&=\frac{2(H_i - \frac{1}{2})^2 B(H_i - \frac{1}{2}, 2 - 2H_i)
}{[\Gamma(\alpha)]^{2}}
\int_0^\infty e^{-\varrho s}s^{\alpha-1}
\int_0^s e^{-\varrho r}
r^{\alpha-1}
(s-r)^{2H_i-2}drds\nonumber\\
&=\frac{2(H_i - \frac{1}{2})^2 B(H_i - \frac{1}{2}, 2 - 2H_i)
}{[\Gamma(\alpha)]^{2}}
\int_0^\infty e^{-\varrho s}s^{2\alpha+2H_i-3}
\int_0^1 e^{-\varrho su}u^{\alpha-1}(1-u)^{2H_i-2}duds\nonumber\\
&=\frac{2(H_i - \frac{1}{2})^2 B(H_i - \frac{1}{2}, 2 - 2H_i)
}{[\Gamma(\alpha)]^{2}}
\int_0^1u^{\alpha-1}(1-u)^{2H_i-2}
\int_0^\infty e^{-\varrho s(1+u)}s^{2\alpha+2H_i-3}dsdu\nonumber\\
&=\frac{2(H_i - \frac{1}{2})^2 B(H_i - \frac{1}{2}, 2 - 2H_i)
\Gamma(2\alpha+2H_i-2)}{[\Gamma(\alpha)]^{2}\varrho^{2\alpha+2H_i-2}}
\int_0^1u^{\alpha-1}(1-u)^{2H_i-2}(1+u)^{-(2\alpha+2H_i-2)}du\nonumber\\
&\leq \frac{2(H_i - \frac{1}{2})^2 B(H_i - \frac{1}{2}, 2 - 2H_i)
\Gamma(2\alpha+2H_i-2)B(\alpha,2H_i-1)}
{[\Gamma(\alpha)]^{2}\varrho^{2\alpha+2H_i-2}}
:=\frac{M_{\varrho, \alpha,H_i}}{3}.
\end{align}
Inserting \eqref{eq3.5}-\eqref{eq3.6} into \eqref{eq3.4} results in
\begin{align}
\mathbb{E}\|x(t, x_0, p_{0})\|^{2}
&\leq3e^{-2\varrho t}\mathbb{E}\|x_0\|^{2}
+\frac{3}{\varrho^{2\alpha}}
(4L\mathbb{E}\|x_{0}\|^{2}
+2LM^{2}+2\mathbb{E}\|g(x_{0},p_{0})\|^{2})\nonumber\\
&
+\sum_{i=1}^{d}
M_{\varrho, \alpha,H_{i}}+\frac{12L}{\varrho^{\alpha}\Gamma(\alpha)}
\int_0^t
(t-s)^{\alpha-1}e^{-\varrho (t-s)}
\mathbb{E}\|x(s)\|^{2}ds.\nonumber
\end{align}
Set $M_{2}:=\frac{3}{\varrho^{2\alpha}}
(4L\mathbb{E}\|x_{0}\|^{2}
+2LM^{2}+2\mathbb{E}\|g(x_{0},p_{0})\|^{2})
+\sum_{i=1}^{d}
M_{\varrho, \alpha,H_{i}}$. Then the above expression can be rewritten as
\begin{align}
e^{\varrho t}\mathbb{E}\|x(t, x_0, p_{0})\|^{2}
&\leq3e^{-\varrho t}\mathbb{E}\|x_0\|^{2}
+\frac{12L}{\varrho^{\alpha}\Gamma(\alpha)}
\int_0^t
(t-s)^{\alpha-1}e^{\varrho s}
\mathbb{E}\|x(s)\|^{2}ds+M_{2}e^{\varrho t}.\nonumber
\end{align}
From Corollary 2.3 in \cite{Wang-Xu-Kloeden}, we obtain that
\begin{align}
e^{\varrho t}\mathbb{E}\|x(t, x_0, p_{0})\|^{2}
\leq (3e^{\frac{\varrho}{2} t}\mathbb{E}\|x_0\|^{2}+M_{2}e^{\varrho t})
\bigg[1+\sum_{n=1}^{\infty}
\bigg(\frac{12L 2^{\alpha}}{\varrho^{2\alpha}}\bigg)^{n}
\bigg],\nonumber
\end{align}
which completes the theorem.
\end{proof}

It then follows that, for sufficiently large $\varrho$, the bounded set $\mathcal{B}^*$, defined by
\begin{align}\label{eq3.7}
\mathcal{B}^* := \left\{ x \in L^{2}(\Omega, \mathcal{F};\mathbb{R}^d) : \mathbb{E}\|x(t,x_{0},p_{0})\|^2 \leq 2+2M_{2}:= R_*^2 \right\}
\end{align}
uniformly absorbs, with respect to \( p_{0} \in P \), the solutions of the Caputo FSDE \eqref{eq0.1}. More precisely, there exist constants $\tilde{\varrho}>0$ and  $\tilde{t}_{\tilde{\varrho}}> 0$,  independent of \( p_0 \in P \), such that for all initial data $\mathbb{E}\|x_0\|^2 \le M$ (with $M>0$), $\varrho\geq \tilde{\varrho}$, \( t \geq  \tilde{t}_{\tilde{\varrho},M}\) and \( p_0 \in P \), the inequality
\[ \mathbb{E}\|x(t, x_0, p_0)\|^{2}  \leq R_*^2\]
holds.

Let $L^{2}_{w}(\Omega, \mathcal{F}; \mathbb{R}^d)$ denote the space  $L^{2}(\Omega, \mathcal{F}; \mathbb{R}^d)$ endowed with the weak topology.
The existence of a global forward attracting set in this weak topology for the Caputo FSDE \eqref{eq0.1} is then established in the following theorem.

\begin{theorem}\label{thm1.3}
Let $\alpha\in(\frac{1}{2},1)$, $H_{i}\in(\frac{1}{2},1),~i=1, \ldots, d$, and Assumptions 1-2 hold.
Suppose that $\varrho$ is sufficiently large. Then
\begin{itemize}
\item [(i)] for any bounded subset \( D \) of \(L^2(\Omega, \mathcal{F}_{0}; \mathbb{R}^d)\), any sequence \(\{t_n\}\) with \(t_n \to +\infty\) as \(n \to +\infty\), \(\{x_{0,n}\}_{n \in \mathbb{N}} \subset D\), \(\{p_n\}_{n \in \mathbb{N}} \subset P\),
    and any sequence of solutions \(\{x(\cdot, x_{0,n}, p_n)\}_{n \in \mathbb{N}} \) of the Caputo FSDE \eqref{eq0.1}, the sequence \(\{x(t_n, x_{0,n}, p_n)\}_{n \in \mathbb{N}} \) is relatively compact in \(L^2_{w}(\Omega, \mathcal{F}; \mathbb{R}^d)\);
  \item [(ii)]for any bounded subset \(D\) of \(L^2(\Omega, \mathcal{F}_{0}; \mathbb{R}^d)\), the set
\begin{align}
\omega_p(D) =
&\bigg\{ y : \exists~t_n \to +\infty, \{x_{0,n}\}_{n \in \mathbb{N}} \subset D, \{p_n\}_{n \in \mathbb{N}} \subset P,~\mbox{and a}\nonumber\\
&~\mbox{sequence of~solutions }\{x(\cdot, x_{0,n}, p_n)\}_{n \in \mathbb{N}}~\mbox{of~the~Caputo}\nonumber\\
&~~\mbox{FSDE~\eqref{eq0.1} such~that}~x(t_n, x_{0,n}, p_n) \to y~\mbox{in}~L^2_{w}(\Omega, \mathcal{F}; \mathbb{R}^d)\bigg\}\nonumber
\end{align}

is nonempty, compact and attracts \(D\) in the weak topology;
  \item [(iii)] the set
\[
\Omega_p^* = \overline{\bigcup\{\omega_p(D) : D \subset L^2(\Omega, \mathcal{F}_{0}; \mathbb{R}^d), \ D \text{ bounded}\}}^{w}
\]

is bounded in \(L^2(\Omega, \mathcal{F}; \mathbb{R}^d)\), compact in the topology of \(L^2_{w}(\Omega, \mathcal{F}; \mathbb{R}^d)\), and, moreover, is the minimal weakly closed set that attracts all bounded subsets of \(L^2(\Omega, \mathcal{F}_{0}; \mathbb{R}^d)\) in the weak topology.
\end{itemize}

\end{theorem}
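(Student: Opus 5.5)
The plan is to deduce everything from the uniform absorbing estimate of Theorem \ref{thm1.2}, the resulting ball $\mathcal{B}^*$ in \eqref{eq3.7}, and the fact that closed balls of the Hilbert space $L^2(\Omega,\mathcal{F};\mathbb{R}^d)$ are weakly compact and weakly metrizable. Note that $L^2(\Omega,\mathcal{F};\mathbb{R}^d)$ need not be separable. In that case I would use the Eberlein--\v{S}mulian theorem, which gives weak sequential compactness of bounded sets in reflexive spaces. Alternatively, one can restrict to the separable closed subspace spanned by the countably many $x(t_n,x_{0,n},p_n)$ involved.

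For (i), fix a bounded $D$ with $\mathbb{E}\|x_0\|^2\le M$ on $D$. Theorem \ref{thm1.2} bounds $\mathbb{E}\|x(t,x_0,p_0)\|^2$ by a quantity that is uniform in $p_0\in P$. Here $M_2$ depends on $x_0$ and $p_0$ only through $\mathbb{E}\|x_0\|^2$ and $\mathbb{E}\|g(x_0,p_0)\|^2$. The latter is bounded using Assumption 2: $\mathbb{E}\|g(x_0,p_0)\|^2\le 2L\,\mathbb{E}\|x_0\|^2+2LM^2+2\sup_{p}\mathbb{E}\|g(0,p)\|^2$. The last supremum is finite by compactness of $P$ and continuity of $g(0,\cdot)$, which is again a consequence of Assumption 2. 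Hence there is $\tilde t$ such that $x(t_n,x_{0,n},p_n)\in\mathcal{B}^*$ for all $t_n\ge\tilde t$. Since $\mathcal{B}^*$ is the closed ball of radius $R_*$, it is weakly compact, so the sequence has a weakly convergent subsequence. This proves (i).

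For (ii), nonemptiness of $\omega_p(D)$ follows directly from (i). Every $y\in\omega_p(D)$ lies in the weakly closed ball $\mathcal{B}^*$, because weak limits preserve the norm bound by lower semicontinuity. On this ball the weak topology is metrizable by some metric $d_w$ (after the separable reduction if necessary). Closedness of $\omega_p(D)$ then follows from a diagonal argument. Take $y_k\in\omega_p(D)$ with $y_k\to y$ weakly. For each $k$ choose $n_k$ so that $t_{n_k}^{(k)}\ge k$ and $d_w\big(x(t^{(k)}_{n_k},\cdot,\cdot),y_k\big)<1/k$. The diagonal sequence then converges weakly to $y$, so $y\in\omega_p(D)$. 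A weakly closed subset of the weakly compact ball is weakly compact. Attraction is proved by contradiction. Suppose there exist $\varepsilon>0$, $t_n\to\infty$, $x_{0,n}\in D$ and $p_n\in P$ with $\operatorname{dist}_{d_w}\big(x(t_n,x_{0,n},p_n),\omega_p(D)\big)\ge\varepsilon$. By (i) a subsequence converges weakly to some $y$. By definition $y\in\omega_p(D)$, which contradicts the distance bound.

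For (iii), by the previous steps all $\omega_p(D)$ lie in the weakly closed ball $\mathcal{B}^*$. Hence so does their weak closure $\Omega_p^*$, which is therefore bounded and weakly compact. It attracts every bounded $D$ because it contains $\omega_p(D)$. For minimality, let $K$ be any weakly closed set attracting all bounded sets in the weak topology, and take $y\in\omega_p(D)$ with $x(t_n,x_{0,n},p_n)\rightharpoonup y$. Attraction gives $d_w\big(x(t_n,x_{0,n},p_n),K\big)\to0$. Compactness of $K\cap\mathcal{B}^*$ then lets us extract points $k_n\in K$ with $d_w(x(t_n,x_{0,n},p_n),k_n)\to0$, so $y\in K$. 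Taking the union over $D$ and then the weak closure gives $\Omega_p^*\subset K$.

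I expect the main obstacle to be the metrizability and uniformity issues. First, the absorbing time must be uniform over $D$ and $P$, which requires the uniform bound on $\mathbb{E}\|g(x_0,p_0)\|^2$ described above. Second, the weak topology must be handled carefully so that sequential arguments are legitimate. The way to do this is to work consistently inside the fixed ball $\mathcal{B}^*$, where these arguments are valid after the separability reduction.
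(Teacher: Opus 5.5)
Your route is the paper's. Theorem~\ref{thm1.2} together with weak compactness of bounded sets in the reflexive space $L^2(\Omega,\mathcal{F};\mathbb{R}^d)$ gives (i), and the contradiction argument for attraction and the definition-based minimality are the paper's steps. Your uniform bound on $\mathbb{E}\|g(x_0,p_0)\|^2$ and your diagonal argument for weak closedness of $\omega_p(D)$ usefully fill in things the paper only asserts.

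There is, however, a genuine hole in the boundedness claim of (iii), and the paper shares it. You need one ball containing every $\omega_p(D)$, independent of $D$. Your own bookkeeping shows Theorem~\ref{thm1.2} does not provide one. The constant $M_2$ contains $\frac{12L}{\varrho^{2\alpha}}\mathbb{E}\|x_0\|^2$ and $\frac{6}{\varrho^{2\alpha}}\mathbb{E}\|g(x_0,p_0)\|^2$, which you bound by a multiple of $\mathbb{E}\|x_0\|^2$ plus a constant. Neither term is multiplied by a decaying exponential. So the radius you actually get is $\limsup_{t\to\infty}\mathbb{E}\|x(t,x_0,p_0)\|^2\le S\sup_{D\times P}M_2$, with $S$ the series factor, and it grows with $\sup_{x\in D}\mathbb{E}\|x\|^2$. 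Calling this ``the ball $\mathcal{B}^*$ of radius $R_*$'' and then placing all $\omega_p(D)$ inside it is not justified. Citing \eqref{eq3.7} does not help, since there $R_*^2=2+2M_2$ again depends on $x_0$. You also cannot restart the equation from a point of the ball, because the dynamics has memory. The repair is to redo \eqref{eq3.5} with $g(0,\vartheta_s(p_0))$ as the comparison point. Assumption~2 gives $\mathbb{E}\|g(x(s),\vartheta_s(p_0))\|^2\le 2L\,\mathbb{E}\|x(s)\|^2+2\sup_{p\in P}\|g(0,p)\|^2$. The same Gronwall step then yields $\mathbb{E}\|x(t)\|^2\le (3e^{-\varrho t/2}\mathbb{E}\|x_0\|^2+M_2')S'$ with $M_2'$ independent of $x_0$. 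This gives a fixed ball that absorbs every bounded $D$ after a $D$-dependent time.

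The minimality step also fails as written. Attraction only gives $\operatorname{dist}(x(t_n,x_{0,n},p_n),K)\to0$, i.e.\ points $k_n\in K$ with no control on their norms. Since $K$ is an arbitrary, possibly unbounded, weakly closed set, compactness of $K\cap\mathcal{B}^*$ says nothing about these $k_n$. With a weak metric extended to all of $L^2$ the conclusion is in fact false. Take $d_w(x,z)=\sum_m 2^{-m}|\langle x-z,f_m\rangle|$ with $\{f_m\}$ dense in the unit ball and $(e_j)$ orthonormal. Then $K=\{ne_{j_n}:n\ge1\}$, with $j_n$ increasing fast enough, is weakly closed and omits $0$, yet $d_w(0,K)=0$. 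Moreover, your $d_w$ exists only on a ball after a separable reduction tied to countably many points. It therefore cannot express attraction of whole sets when $L^2(\Omega,\mathcal{F};\mathbb{R}^d)$ is nonseparable.

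Both problems disappear if you define weak attraction by neighbourhoods: every weakly open $U\supseteq K$ eventually contains the trajectories from $D$. This is what the paper's one-line argument implicitly uses. If $y\in\omega_p(D)\setminus K$, regularity of the weak topology gives disjoint weakly open sets $U\supseteq K$ and $V\ni y$, and $x(t_n,x_{0,n},p_n)$ would have to lie eventually in both. The same formulation makes your attraction proof in (ii) metric-free: a subsequence converges weakly to some $y\in\omega_p(D)\subset U$, so it eventually enters $U$. For closedness of $\omega_p(D)$, Eberlein--\v{S}mulian says weak closures of bounded sets are sequential closures. That lets you run your diagonal argument inside the separable span of countably many points.
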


\begin{proof}
By the reflexivity of \(L^2(\Omega, \mathcal{F}; \mathbb{R}^d)\), conclusion $(i)$ is a direct consequence of Theorem \ref{thm1.2}.
It follows that \(\omega_p(D)\) is nonempty and weakly compact.

To show that \(\omega_p(D)\) attracts \(D\) in the weak topology, we argue by contradiction.
Suppose that there exist \(\varepsilon_0 > 0\) and sequences \(\{t_n\}\) with \(t_n \to +\infty\) as \(n \to +\infty\), \(\{x_{0,n}\}_{n \in \mathbb{N}}\subset D\),\(\{p_n\}_{n \in \mathbb{N}} \subset P\), and solutions \(x(\cdot, x_{0,n}, p_n)\) of \eqref{eq0.1} such that
\begin{align}\label{eq3.8}
\operatorname{dist}_w(x(t_n, x_{0,n}, p_n), \omega_p(D)) > \varepsilon_0, \quad \forall n \in \mathbb{N},
\end{align}
where \(\operatorname{dist}_w(\cdot, \cdot)\) is in the sense of weak topology. By conclusion $(i)$, \(x(t_n, x_{0,n}, p_n)\) is relatively compact in \(L^2_{w}(\Omega, \mathcal{F}; \mathbb{R}^d)\) and possesses at least one cluster point \(w\). According to the definition of \(\omega_p(D)\), we have \(w \in \omega_p(D)\), which contradicts \eqref{eq3.8}.

By Theorem \ref{thm1.2} and the conclusion $(ii)$, \( \Omega_p^* \) is bounded in \( L^2(\Omega, \mathcal{F}; \mathbb{R}^d) \) and attracts all bounded subsets of \( L^2(\Omega, \mathcal{F}_{0}; \mathbb{R}^d) \) in the weak topology. Moreover, it is evidently that \( \Omega_p^*\) is compact in the topology of \( L^2_{w}(\Omega, \mathcal{F}; \mathbb{R}^d) \).

It remains to show that \( \Omega_p^* \) is the minimal weakly closed set attracting any bounded set \( D \subset L^2(\Omega, \mathcal{F}_{0}; \mathbb{R}^d) \) in the weak topology. Indeed,
suppose there exists another weakly closed set
\( \Omega_p^{*'} \) that also attracts any bounded set \( D \subset L^2(\Omega, \mathcal{F}_{0}; \mathbb{R}^d) \) in the weak topology, then the definition of \( \omega_p(D) \) directly yields
\( \omega_p(D) \subset \Omega_p^{*'} \),
and thus
\[
\bigcup\{\omega_p(D) : D \subset L^2(\Omega, \mathcal{F}_{0}; \mathbb{R}^d),D \text{ bounded}\} \subset \Omega_p^{*'}.
\]
Since \( \Omega_p^{*'} \) is weakly closed, we have
\[
\Omega_p^* = \overline{\bigcup\{\omega_p(D) : D \subset L^2(\Omega, \mathcal{F}_{0}; \mathbb{R}^d), \ D \text{ bounded}\}}^{w} \subseteq \Omega_p^{*'}.
\]
This completes the proof of Theorem \ref{thm1.3}.
\end{proof}

\noindent{\section{Mean-square attractors}}
We now turn to exploring the existence of attractors for the cocycle \( T \) as well as the skew-product semi-flow \( \Pi \) defined by \eqref{eq3.1} and \eqref{eq3.2}, respectively.

Recall that \(L^2_{w}(\Omega, \mathcal{F}; \mathbb{R}^d)\) is the space \(L^2(\Omega, \mathcal{F}; \mathbb{R}^d)\) equipped with the weak topology. Define
\[
\mathfrak{C}_{t,w}:=C([0,\infty),
L^{2}_{w}(\Omega, \mathcal{F}_{t}; \mathbb{R}^d)),
~~~~
\mathfrak{C}_{w}:=C([0,\infty),
L^{2}_{w}(\Omega, \mathcal{F}; \mathbb{R}^d)).\]
For $\alpha\in (\frac{1}{2},1)$, we equip $\mathfrak{C}_{w}$ with the norm
\begin{align}\label{eq3.8-7-8}
\|f\|^{2}_\alpha := \mathbb{E}\|f(0)\|^{2} + \sum_{N=1}^\infty \frac{1}{2^N N^{2\alpha}} \mathbb{E}\|f\|^{2}_N,
\end{align}
with
\[
\mathbb{E}\|f\|^{2}_N := \sup_{t \in [0, N]} \mathbb{E}\|f(t)\|^{2}, \quad N = 1, 2, \dots.
\]

It follows that \((\mathfrak{C}_{w}, \|\cdot\|_\alpha)\) is a complete Banach space. With the family   \(\mathscr{D}\subset \mathfrak{C}_w\) defined in \eqref{eq00.400-7}, we now define the mean-square uniform (with respect to \(p \in P\)) \(\mathscr{D}\)-attractor of the cocycle \(T\) in \(\mathfrak{C}_{w}\) as follows.

\begin{definition}\label{def1.4}
A nonempty set \(\mathcal{A}_P\) in \(\mathfrak{C}_{w}\) is called the mean-square uniform (with respect to \(p \in P\)) \(\mathscr{D}\)-attractor of the cocycle \(T\) if
    \begin{enumerate}
        \item[(i)] \(\mathcal{A}_P\) is closed and bounded in \(\mathfrak{C}_{w}\);
        \item[(ii)] \(\mathcal{A}_P\) is uniformly (with respect to \(p \in P\)) \(\mathscr{D}\)-attracting under \(T\), that is, for any \(\mathcal{D} \in \mathscr{D}\),
        \[
        \lim_{t \to \infty} \operatorname{dist}_{\mathfrak{C}_{w}}
        \bigl(T_{t,0}(\mathcal{D}, P), \mathcal{A}_P\bigr) = 0;
        \]
        \item[(iii)] Minimality: If \(\mathcal{A}'_P\) is a closed set in \(\mathfrak{C}_{w}\) also uniformly \(\mathscr{D}\)-attracting, then \(\mathcal{A}_P \subset \mathcal{A}'_P\).
    \end{enumerate}
\end{definition}

Define the universe \(\mathbb{D}\) of some kind of bounded sets in \(\mathfrak{C}_{w} \times P\) by
\[
\mathbb{D} = \{ \mathcal{D} \times P : \mathcal{D} \in \mathscr{D} \}.
\]
On the product space \(\mathbb{X} := \mathfrak{C}_w \times P\) metrized by \(d_{\mathbb{X}} = d_{\mathfrak{C}_w} + d_P\), the family  \(\Pi = \{\Pi_{t,\xi}\}_{t,\xi \in \mathbb{R}^+}\) defined in \eqref{eq3.2} forms a semi-group.
Analogously, we give the definition of the mean-square \(\mathbb{D}\)-attractor for the semi-group \(\Pi\) in \(\mathfrak{C}_w \times P\) as follows.

\begin{definition}\label{def1.5}
    A nonempty set \(\mathbb{A}\) in \(\mathbb{X} = \mathfrak{C}_{w} \times P\) is called the mean-square \(\mathbb{D}\)-attractor of the semi-group \(\Pi\) if
    \begin{enumerate}
        \item[(i)] \(\mathbb{A}\) is closed and bounded in \(\mathbb{X}\);
        \item[(ii)] \(\mathbb{A}\) is \(\mathbb{D}\)-attracting under \(\Pi\), that is, for any \(\mathcal{D} \times P \in \mathbb{D}\),
        \[
        \lim_{t \to \infty} \operatorname{dist}_{\mathbb{X}}\bigl(\Pi_{t,0}(\mathcal{D} \times P), \mathbb{A}\bigr) = 0;
        \]
        \item[(iii)]  Minimality: If \(\mathbb{A}'\) is a closed \(\mathbb{D}\)-attracting set in \(\mathbb{X}\), then \(\mathbb{A} \subset \mathbb{A}'\).
    \end{enumerate}
\end{definition}

To establish the existence of mean-square attractors for the Caputo FSDE \eqref{eq0.1}, it is essential first to obtain various estimates on its solutions in \( L^{2}(\Omega, \mathcal{F};\mathbb{R}^d) \).

\subsection{Boundedness and continuity of solution}

Under Assumptions 1-2, let \( x(t, x_0, p_0) \) denote the solution to the Caputo FSDE \eqref{eq0.1} with initial values \( x(0, x_0, p_0) = x_0 \) and $p_0\in P$. This solution satisfies estimate \eqref{eq3.3-1}, is absorbed into the set \( \mathcal{B}^* \subset L^{2}(\Omega, \mathcal{F};\mathbb{R}^d) \) given in \eqref{eq3.7} within finite time, and remains in \( \mathcal{B}^*\) thereafter whenever $\varrho$ is sufficiently large. The estimates below hold true.
\begin{align}\label{eq3.9}
B_R := \sup_{t \geq 0, \mathbb{E}\|x_0\|^{2} \leq R^{2}, p_{0} \in P} \mathbb{E}\|x(t, x_0, p_0)\|^{2} < \infty,
\end{align}
\begin{align}\label{eq3.10}
B_R^g := \sup_{\mathbb{E}\|x\|^{2} \leq B_R, p \in P} \mathbb{E}\|g(x, p)\|^{2} < \infty,
\end{align}
where Assumption 2 has been used in the second bound. Notice that \eqref{eq3.9} and \eqref{eq3.10} are valid for \( R = R_* \) provided \( t \geq  \tilde{t}_{\varrho, R}\).

\begin{lemma}\label{lem1.6}
Let $\alpha\in (\frac{1}{2},1)$, $H_{i}\in(\frac{1}{2},1),~i=1, \ldots, d$, and Assumptions 1-2 hold.  Then the solution of the Caputo FSDE \eqref{eq0.1} is continuous in time. In particular, there exists a constant $c>0$ such that
\begin{align}
\mathbb{E}\|x(t + \theta, x_0, p_0) - x(t, x_0, p_0)\|^{2}
\leq c(\theta^{2\alpha}+\theta^{2}
+\sum_{i=1}^{d}\theta^{2H_i+2\alpha-2})
\nonumber
\end{align}
for any $\theta\geq0$, $t\geq0$,  \(\mathbb{E}\|x_0\|^{2} \leq R^{2}\) and \(p_0 \in P\), where $c$ may depend on $R, \alpha, \varrho, H, B_{R}^{g},d$.

\end{lemma}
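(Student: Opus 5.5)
We start from the integral representation \eqref{eq0.2} at the times $t+\theta$ and $t$ and subtract. This splits the increment into three pieces: an initial-value term, a drift term and a noise term. The elementary inequality $\|a+b+c\|^2\le 3(\|a\|^2+\|b\|^2+\|c\|^2)$ then reduces the claim to bounding each piece separately. Every estimate must be uniform in $t\ge0$, in $\mathbb{E}\|x_0\|^2\le R^2$ and in $p_0\in P$. For this we rely on the uniform bounds \eqref{eq3.9}--\eqref{eq3.10}, which give $\mathbb{E}\|g(x(s),\vartheta_s(p_0))\|^2\le B_R^g$ for all $s\ge0$.

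\emph{Initial-value term.} We have $\mathbb{E}\|x_0(e^{-\varrho(t+\theta)}-e^{-\varrho t})\|^2\le R^2(1-e^{-\varrho\theta})^2\le R^2\varrho^2\theta^2$, which produces the $\theta^2$ contribution.

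\emph{Drift term.} Write
\[
\int_0^{t+\theta}a(t+\theta,s)g\,ds-\int_0^t a(t,s)g\,ds=\int_t^{t+\theta}a(t+\theta,s)g\,ds+\int_0^t\bigl[a(t+\theta,s)-a(t,s)\bigr]g\,ds .
\]
For each integral we apply Cauchy--Schwarz in the weighted form $\mathbb{E}\|\int k g\|^2\le(\int|k|)\int|k|\,\mathbb{E}\|g\|^2\le B_R^g(\int|k|)^2$. The first integral has $\int_t^{t+\theta}a(t+\theta,s)ds\le\theta^\alpha/\Gamma(\alpha+1)$, which gives $\theta^{2\alpha}$. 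For the second, the kernel $(u)^{\alpha-1}e^{-\varrho u}$ is decreasing in $u$, so $|a(t+\theta,s)-a(t,s)|=a(t,s)-a(t+\theta,s)$. A telescoping computation then gives
\[
\int_0^t|a(t+\theta,s)-a(t,s)|ds=\int_0^t a(t,s)ds-\int_\theta^{t+\theta}a(t+\theta,s)ds\le\int_0^\theta\frac{u^{\alpha-1}}{\Gamma(\alpha)}du=\frac{\theta^\alpha}{\Gamma(\alpha+1)},
\]
uniformly in $t$. This again contributes $\theta^{2\alpha}$.

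\emph{Noise term.} Decompose the noise increment the same way: $\int_t^{t+\theta}a(t+\theta,s)dB^{H,\lambda}(s)+\int_0^t[a(t+\theta,s)-a(t,s)]dB^{H,\lambda}(s)$. For each we use the It\^{o} isometry \cite[(2.12)]{Zhang-Wang-Hu} together with the bound $\phi_{H_i,\lambda_i}(s,r)\le C_{H_i}|s-r|^{2H_i-2}$ used in \eqref{eq3.6}.

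For the first piece, substitute $u=t+\theta-s$ and $v=t+\theta-r$ and drop the exponential. This gives
\[
C\int_0^\theta\int_0^\theta u^{\alpha-1}v^{\alpha-1}|u-v|^{2H_i-2}du\,dv=C'\theta^{2\alpha+2H_i-2},
\]
by scaling. The integral is finite since $\alpha>0$ and $2H_i-2>-1$.

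The second piece is the main obstacle, because the kernel difference must be controlled against the singular covariance, uniformly in $t$. The key inequality we plan to use is
\[
\int_0^t\!\int_0^t|k(s)||k(r)||s-r|^{2H_i-2}ds\,dr\le C\|k\|_{L^{1/H_i}(0,t)}^2,
\]
the Hardy--Littlewood--Sobolev estimate, which is standard for fBm with $H>\tfrac12$. Here $k(s)=a(t,s)-a(t+\theta,s)\ge0$. Using $0\le k\le a(t,s)$ and $k\le a(t,s)-a(t+\theta,s)$, we estimate $\int_0^t k^{1/H_i}$ by splitting the integration range into $t-s\le\theta$ and $t-s>\theta$. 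On the near range, $\int_0^\theta u^{(\alpha-1)/H_i}du\sim\theta^{(\alpha-1)/H_i+1}$; this is finite because $\alpha>\tfrac12$ and $H_i<1$ give $(\alpha-1)/H_i>-1$. On the far range, the mean value theorem gives $k\lesssim\theta u^{\alpha-2}$, whose $1/H_i$-th power integrated over $u>\theta$ gives the same order $\theta^{(\alpha-1)/H_i+1}$; the exponential factor can only help here. Raising to the power $2H_i$ yields $\theta^{2\alpha-2+2H_i}$, uniformly in $t$.

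If the HLS route proves awkward, a fallback is the direct splitting $|k(s)|\le a(t,s)\mathbf 1_{t-s\le\theta}+\theta\,\partial$-bound, estimated by the same scaling as in \eqref{eq3.6}. Collecting the three terms gives the stated bound with $c$ depending on $R,\alpha,\varrho,H,B_R^g,d$, and continuity in time follows.
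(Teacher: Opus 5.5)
Your proposal is correct. It follows the paper's overall architecture: the $3J_1+3J_2+3J_3$ split, the uniform bound $B_R^g$ from \eqref{eq3.9}--\eqref{eq3.10}, the bound $\theta^{\alpha}/\Gamma(\alpha+1)$ for the drift piece over $[t,t+\theta]$, and the scaling computation \eqref{eq3.10-01} for the noise piece over $[t,t+\theta]$. Where you depart genuinely from the paper is in the kernel-difference terms.

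\textbf{The paper's route.} It writes $a(t+\theta,s)-a(t,s)$ as a power difference times $e^{-\varrho(t+\theta-s)}$, plus a power times an exponential difference. In the noise term the second piece gives the extra $\theta^2$. The first piece is rescaled by $s=\theta s'$ to $\theta^{2\alpha+2H_i-2}$ times the double integral of $[s^{\alpha-1}-(s+1)^{\alpha-1}][r^{\alpha-1}-(r+1)^{\alpha-1}]|s-r|^{2H_i-2}$, and the finiteness of that integral is asserted without proof.

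\textbf{Your route.} You use that $u\mapsto u^{\alpha-1}e^{-\varrho u}$ is decreasing, so $k=a(t,\cdot)-a(t+\theta,\cdot)\ge 0$. This buys two things. It gives a sharper telescoped drift bound, $\theta^{\alpha}/\Gamma(\alpha+1)$ instead of the paper's $2\theta^\alpha/\Gamma(\alpha)+2\theta^\alpha/\Gamma(\alpha+1)$. Via Hardy--Littlewood--Sobolev, it also reduces the noise term to the one-dimensional quantity $\|k\|_{L^{1/H_i}(0,t)}^2$.

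Your near/far split is sound. On the far range $|\frac{d}{du}(u^{\alpha-1}e^{-\varrho u})|\le(1-\alpha+e^{-1})u^{\alpha-2}$. So the exponential does not simply ``help'': it contributes an extra derivative term, but that term is of the same order. The split also makes the role of the hypotheses transparent: integrability of $k^{1/H_i}$ at $u=0$ needs $\alpha+H_i>1$, and at $u=\infty$ it needs $\alpha+H_i<2$.

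\textbf{Comparison.} In effect your argument supplies the finiteness the paper leaves implicit; the same HLS bound applied to $s^{\alpha-1}-(s+1)^{\alpha-1}$ would justify the paper's constant. The price is importing HLS rather than staying with elementary Beta-function calculations.

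\textbf{Two small slips.}
\begin{itemize}
\item The inequality $(\alpha-1)/H_i>-1$ follows from $H_i>\frac12$, which gives $(\alpha-1)/H_i>2\alpha-2>-1$; it does not follow from $H_i<1$.
\item In the telescoping identity the subtracted term should be $\int_0^t a(t+\theta,s)\,ds=\frac{1}{\Gamma(\alpha)}\int_\theta^{t+\theta}u^{\alpha-1}e^{-\varrho u}\,du$. Taken literally, $\int_\theta^{t+\theta}a(t+\theta,s)\,ds$ equals $\int_0^t a(t,s)\,ds$, so the difference would vanish.
\end{itemize}
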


\begin{proof}
In view of \eqref{eq0.2}, we have
\begin{align}
&\mathbb{E}\|x(t + \theta, x_0, p_0) - x(t, x_0, p_0)\|^{2}
\leq 3\mathbb{E}\|x_0\|^{2} e^{-2\varrho t}(1-e^{-\varrho \theta})^{2}\nonumber\\
&+ 3\mathbb{E}\bigg\|\int_0^{t+\theta}
a(t+\theta, s) g(x(s), \vartheta_s(p_0))ds
-
\int_0^{t}
a(t, s) g(x(s), \vartheta_s(p_0))ds\bigg\|^{2}
\nonumber\\
&+3
\mathbb{E}\bigg\|\int_0^{t+\theta}
a(t+\theta, s)dB^{H,\lambda}(s)-
\int_0^{t}
a(t, s)dB^{H,\lambda}(s)\bigg\|^{2}
\nonumber\\
&:=3J_{1}+3J_{2}+3J_{3}.\nonumber
\end{align}
It is easy to see that
\begin{align}
J_{1}\leq \mathbb{E}\|x_0\|^{2} e^{-2\varrho t}\varrho^{2}\theta^{2}.\nonumber
\end{align}
Taking into account \eqref{eq3.10}, we deduce that
\begin{align}
J_{2}&\leq
2\mathbb{E}\bigg\|\int_t^{t+\theta}
a(t+\theta, s) g(x(s), \vartheta_s(p_0))ds\bigg\|^{2}
\nonumber\\
&+2\mathbb{E}\bigg\|\int_0^{t}
[a(t+\theta, s)-a(t, s)]g(x(s), \vartheta_s(p_0))ds\bigg\|^{2}
\nonumber\\
&\leq 2\int_t^{t+\theta}\int_t^{t+\theta}
a(t+\theta, s)a(t+\theta, r)
\mathbb{E}\|g(x(s), \vartheta_s(p_0))\|^{2}
dsdr
\nonumber\\
&+2\int_0^{t}\int_0^{t}
[a(t+\theta, s)-a(t, s)]
[a(t+\theta, r)-a(t, r)]
\mathbb{E}\|g(x(s), \vartheta_s(p_0))\|^{2}
dsdr\nonumber\\
&\leq 2B_R^g\int_t^{t+\theta}\int_t^{t+\theta}
a(t+\theta, s)a(t+\theta, r)
dsdr
\nonumber\\
&+2B_R^g\int_0^{t}\int_0^{t}
[a(t+\theta, s)-a(t, s)]
[a(t+\theta, r)-a(t, r)]
dsdr\nonumber\\
&\leq \frac{2B_R^g \theta^{2\alpha}}{[\Gamma(\alpha+1)]^{2}}
+\frac{16B_R^g\theta^{2\alpha}}{[\Gamma(\alpha)]^{2}}
+\frac{16B_R^g\theta^{2\alpha}}
{[\Gamma(\alpha+1)]^{2}}\nonumber\\
&=\frac{18B_R^g \theta^{2\alpha}}{[\Gamma(\alpha+1)]^{2}}
+\frac{16B_R^g\theta^{2\alpha}}{[\Gamma(\alpha)]^{2}},\nonumber
\end{align}
where we have used $\int_t^{t+\theta}
a(t+\theta, s)ds\leq \frac{\theta^{\alpha}}{\Gamma(\alpha+1)}$ and
\begin{align}
&\int_0^{t}
[a(t, s)-a(t+\theta, s)]ds\nonumber\\
&=\frac{1}{\Gamma(\alpha)}
\int_{0}^{t}
(t+\theta-s)^{\alpha-1}
\bigg(e^{-\varrho (t-s)}-e^{-\varrho (t+\theta-s)}
\bigg)ds\nonumber\\
&+\frac{1}{\Gamma(\alpha)}
\int_{0}^{t}e^{-\varrho (t-s)}
[(t-s)^{\alpha-1}-(t+\theta-s)^{\alpha-1}]ds
\nonumber\\
&\leq
\frac{1}{\Gamma(\alpha)}
(1-e^{-\varrho \theta})
\int_{0}^{t}
(r+\theta)^{\alpha-1}
e^{-\varrho r}dr
+\frac{1}{\Gamma(\alpha)}
\int_{0}^{t}
[r^{\alpha-1}-(r+\theta)^{\alpha-1}]dr\nonumber\\
&\leq \frac{2\theta^{\alpha}}{\Gamma(\alpha)}
+\frac{[t^{\alpha}+\theta^{\alpha}-(t+\theta)^{\alpha}]}
{\Gamma(\alpha+1)}\nonumber\\
&\leq \frac{2\theta^{\alpha}}{\Gamma(\alpha)}
+\frac{2\theta^{\alpha}}{\Gamma(\alpha+1)}.\nonumber
\end{align}
By the It\^{o} isometry of TFBM \cite[(2.12)]{Zhang-Wang-Hu}, we estimate
\begin{align}
J_{3}&\leq
\frac{3}{[\Gamma(\alpha)]^{2}}\mathbb{E}\bigg\|
\int_{t}^{t+\theta}
(t+\theta-s)^{\alpha-1}e^{-\varrho (t+\theta-s)}
dB^{H,\lambda}(s)\bigg\|^{2}\nonumber\\
&+\frac{3}{[\Gamma(\alpha)]^{2}}\mathbb{E}\bigg\|
\int_{0}^{t}[(t-s)^{\alpha-1}-(t+\theta-s)^{\alpha-1}]
e^{-\varrho (t+\theta-s)}
dB^{H,\lambda}(s)\bigg\|^{2}\nonumber\\
&+\frac{3}{[\Gamma(\alpha)]^{2}}\mathbb{E}\bigg\|
\int_{0}^{t}(t-s)^{\alpha-1}
[e^{-\varrho (t+\theta-s)}-e^{-\varrho (t-s)}]
dB^{H,\lambda}(s)\bigg\|^{2}\nonumber\\
&\leq \frac{3}{[\Gamma(\alpha)]^{2}}
\sum_{i=1}^{d}
\int_{t}^{t+\theta}
\int_{t}^{t+\theta}
(t+\theta-s)^{\alpha-1}
(t+\theta-r)^{\alpha-1}
e^{-\varrho (t+\theta-s)}
e^{-\varrho (t+\theta-r)}
\phi_{H_{i},\lambda_{i}}(s,r)dsdr
\nonumber\\
&+\frac{3}{[\Gamma(\alpha)]^{2}}
\sum_{i=1}^{d}
\int_{0}^{t}\int_{0}^{t}
[(t-s)^{\alpha-1}-(t+\theta-s)^{\alpha-1}]
[(t-r)^{\alpha-1}-(t+\theta-r)^{\alpha-1}]
\nonumber\\
&~~\times
e^{-\varrho(t-s+\theta)}e^{-\varrho(t-r+\theta)}
\phi_{H_{i},\lambda_{i}}(s,r)dsdr\nonumber\\
&+\frac{3}{[\Gamma(\alpha)]^{2}}
\sum_{i=1}^{d}
\int_{0}^{t}\int_{0}^{t}
[e^{-\varrho (t+\theta-s)}-e^{-\varrho (t-s)}]
[e^{-\varrho (t+\theta-r)}-e^{-\varrho (t-r)}]
\nonumber\\
&~~\times
(t-s)^{\alpha-1}(t-r)^{\alpha-1}
\phi_{H_{i},\lambda_{i}}(s,r)
dsdr\nonumber\\
&\leq \frac{c_{H}}{[\Gamma(\alpha)]^{2}}
\sum_{i=1}^{d}
\int_{0}^{\theta}
\int_{0}^{\theta}
s^{\alpha-1}
r^{\alpha-1}
|r-s|^{2H_{i}-2}dsdr
\nonumber\\
&+
\frac{c_{H}}{[\Gamma(\alpha)]^{2}}
\sum_{i=1}^{d}
\int_{0}^{t}\int_{0}^{t}
[s^{\alpha-1}-(s+\theta)^{\alpha-1}]
[r^{\alpha-1}-(r+\theta)^{\alpha-1}]
e^{-\varrho(s+\theta)}e^{-\varrho(r+\theta)}
|r-s|^{2H_{i}-2}dsdr
\nonumber\\
&+
\frac{c_{H}}{[\Gamma(\alpha)]^{2}}[1-e^{-\varrho \theta}]^{2}
\sum_{i=1}^{d}
\int_{0}^{t}\int_{0}^{t}
s^{\alpha-1}r^{\alpha-1}
e^{-\varrho s}e^{-\varrho r}
|r-s|^{2H_{i}-2}dsdr\nonumber\\
&\leq
c_{H,\alpha, \varrho}
\sum_{i=1}^{d}\theta^{2H_i+2\alpha-2}
+c_{H,\alpha, \varrho, d}\theta^{2},\nonumber
\end{align}
where we have used
\begin{align}\label{eq3.10-01}
&\int_{0}^{\theta}
\int_{0}^{\theta}
s^{\alpha-1}
r^{\alpha-1}
|r-s|^{2H_{i}-2}dsdr\nonumber\\
&=2\int_{0}^{\theta}
\int_{0}^{r}
s^{\alpha-1}
r^{\alpha-1}
(r-s)^{2H_{i}-2}dsdr\nonumber\\
&=\frac{2B(\alpha, 2H_{i}-1)\theta^{2\alpha+2H_{i}-2}}{2\alpha+2H_{i}-2},
\end{align}
and
\begin{align}
&\int_{0}^{\infty}\int_{0}^{\infty}
[s^{\alpha-1}-(s+\theta)^{\alpha-1}]
[r^{\alpha-1}-(r+\theta)^{\alpha-1}]
e^{-\varrho s}e^{-\varrho r}
|r-s|^{2H_{i}-2}dsdr\nonumber\\
&\leq \theta^{2H_{i}+2\alpha-2}
\int_{0}^{\infty}\int_{0}^{\infty}
[s^{\alpha-1}-(s+1)^{\alpha-1}]
[r^{\alpha-1}-(r+1)^{\alpha-1}]
e^{-\theta\varrho s}e^{-\theta\varrho r}
|r-s|^{2H_{i}-2}dsdr\nonumber\\
&\leq c_{H_{i},\alpha, \varrho}\theta^{2H_{i}+2\alpha-2}.\nonumber
\end{align}
Therefore, we conclude that there exists a constant $c>0$ such that
\begin{align}
\mathbb{E}\|x(t + \theta, x_0, p_0) - x(t, x_0, p_0)\|^{2}
\leq c(\theta^{2\alpha}+\theta^{2}
+\sum_{i=1}^{d}\theta^{2H_i+2\alpha-2}
),
\nonumber
\end{align}
where $c$ may depend on $R, \alpha, \varrho, H, B_{R}^{g}$ and $d$.
\end{proof}

In addition, we obtain the following estimate for  \(T_{t,0}(x_0e^{-\varrho\cdot}, p_{0})\).

\begin{lemma}\label{lem1.7}
For \(\mathbb{E}\|x_0\|^{2} \leq R^{2}\), \( p_0 \in P \), $\varrho\geq \tilde{\varrho}$, \( t \geq  \tilde{t}_{\tilde{\varrho}, R}\) and $\theta\geq0$, we have
\begin{align}\label{eq3.11}
&\mathbb{E}
\|(T_{t,0}(x_0e^{-\varrho\cdot}, p_{0}))(\theta)\|^{2}
\nonumber\\
&\leq \frac{4B_{R_*}^g \theta^{2\alpha}}{[\Gamma(\alpha+1)]^{2}}
+2\sum_{i=1}^{d}M_{3,i}\theta^{2\alpha+2H_{i}-2}
+2R_*^{2},
\end{align}
where $M_{3,i}=\frac{4(H_{i}- \frac{1}{2})^2 B(H_{i}- \frac{1}{2}, 2 - 2H_{i})B(\alpha, 2H_{i}-1)}{[\Gamma(\alpha)]^{2}(2\alpha+2H_{i}-2)}$, $i=1, \ldots, d$, and \( R_* = \sqrt{2+2M_{2}} \) is given in \eqref{eq3.7}.
\end{lemma}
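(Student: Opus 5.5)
The idea is to compare $(T_{t,0}(x_0e^{-\varrho\cdot},p_0))(\theta)$ with the solution value $x(t+\theta,x_0,p_0)$. By \eqref{eq3.1} with $f=x_0e^{-\varrho\cdot}$ and \eqref{eq0.2},
\[
(T_{t,0}(x_0e^{-\varrho\cdot},p_0))(\theta)=x(t+\theta,x_0,p_0)-\int_t^{t+\theta}a(t+\theta,s)g(x(s),\vartheta_s(p_0))ds-\int_t^{t+\theta}a(t+\theta,s)dB^{H,\lambda}(s).
\]
I will not use a three-term split, which would give a factor $3$. Instead I write the right-hand side as $x(t+\theta,x_0,p_0)-Y$, where $Y$ collects both integrals over $[t,t+\theta]$. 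Then
\[
\mathbb{E}\|T\|^2\le 2\,\mathbb{E}\|x(t+\theta,x_0,p_0)\|^2+2\,\mathbb{E}\|Y\|^2 .
\]
The first term is at most $2R_*^2$, since $t+\theta\ge t\ge\tilde t_{\tilde\varrho,R}$ and the absorbing property \eqref{eq3.7} holds uniformly in $p_0$.

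For $\mathbb{E}\|Y\|^2\le 2(\cdot)+2(\cdot)$, I use the drift and noise pieces separately.

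\textbf{Drift piece.} By Cauchy--Schwarz (as in \eqref{eq3.4}),
\[
\mathbb{E}\Big\|\int_t^{t+\theta}a\,g\,ds\Big\|^2\le\Big(\int_t^{t+\theta}a(t+\theta,s)ds\Big)^2\sup_{s\ge t}\mathbb{E}\|g(x(s),\vartheta_s(p_0))\|^2\le \frac{\theta^{2\alpha}}{[\Gamma(\alpha+1)]^2}B^g_{R_*}.
\]
Here I use $\int_t^{t+\theta}a(t+\theta,s)ds\le\theta^\alpha/\Gamma(\alpha+1)$ (dropping the exponential). I also use \eqref{eq3.10} with $R=R_*$, which is valid because $\mathbb{E}\|x(s)\|^2\le R_*^2$ for $s\ge t$.

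\textbf{Noise piece.} By the Itô isometry of TFBM and the bound $\phi_{H_i,\lambda_i}(s,r)\le (H_i-\frac12)^2B(H_i-\frac12,2-2H_i)|s-r|^{2H_i-2}$, dropping the exponentials and substituting $s\mapsto t+\theta-s$, the noise piece is bounded by
\[
\sum_i \frac{(H_i-\frac12)^2B(H_i-\frac12,2-2H_i)}{[\Gamma(\alpha)]^2}\int_0^\theta\!\!\int_0^\theta s^{\alpha-1}r^{\alpha-1}|r-s|^{2H_i-2}dsdr .
\]
The double integral is evaluated by \eqref{eq3.10-01}, which gives $2B(\alpha,2H_i-1)\theta^{2\alpha+2H_i-2}/(2\alpha+2H_i-2)$. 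Hence the noise piece is at most $\frac12\sum_i M_{3,i}\theta^{2\alpha+2H_i-2}$.

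\textbf{Collecting constants.} The bookkeeping must land exactly on the stated constants $4B^g_{R_*}/[\Gamma(\alpha+1)]^2$ and $2M_{3,i}$. With $\mathbb{E}\|Y\|^2\le 2(\text{drift})+2(\text{noise})$, I get $2\,\mathbb{E}\|Y\|^2\le 4B^g_{R_*}\theta^{2\alpha}/[\Gamma(\alpha+1)]^2+2\sum_iM_{3,i}\theta^{2\alpha+2H_i-2}$, since $4\cdot\frac12 M_{3,i}=2M_{3,i}$. The main point needing care is this constant tracking, so that each term matches \eqref{eq3.11}. Everything else follows routinely from earlier estimates.
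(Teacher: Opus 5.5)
Your proposal is correct and is essentially the paper's own proof. The paper likewise writes $x(t+\theta,x_0,p_0)$ as $(T_{t,0}(x_0e^{-\varrho\cdot},p_0))(\theta)$ plus the drift and noise integrals over $[t,t+\theta]$, and bounds their combined second moment by $2(\text{drift})+2(\text{noise})$ using $B^g_{R_*}$, the TFBM isometry with $\phi_{H_i,\lambda_i}(s,r)\le (H_i-\frac12)^2B(H_i-\frac12,2-2H_i)|s-r|^{2H_i-2}$, and \eqref{eq3.10-01}. It then concludes via $\mathbb{E}\|T\|^2\le 2\mathbb{E}\|x-T\|^2+2\mathbb{E}\|x\|^2$ with the absorbing bound $R_*^2$, and your constant bookkeeping reproduces exactly the stated coefficients $4B^g_{R_*}/[\Gamma(\alpha+1)]^2$ and $2M_{3,i}$.
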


\begin{proof}
In view of \eqref{eq0.2} and \eqref{eq3.1}, we find that
\begin{align}
&x(t+\theta, x_0, p_0) \nonumber\\
&= x_0 e^{-\varrho (t+\theta)}+
\int_0^{t+\theta} a(t+\theta, s) g(x(s), \vartheta_s(p_0))ds
+\int_0^{t+\theta} a(t+\theta, s)dB^{H,\lambda}(s)
\nonumber\\
&=(T_{t,0}(x_0 e^{-\varrho\cdot}, p_0))(\theta)
+\int_{t}^{t+\theta} a(t+\theta, s) g(x(s), \vartheta_s(p_0))ds
+\int_{t}^{t+\theta}a(t+\theta, s)dB^{H,\lambda}(s).\nonumber
\end{align}
Recall that \( x(t, x_0, p_0) \in \mathcal{B}^*, \) i.e.,
\(\mathbb{E}\|x(t, x_0, p_0)\|^{2} \leq R_*^{2}\) for all \(\mathbb{E}\|x_0\|^{2} \leq R^{2}\), \( p_0 \in P \), $\varrho\geq \tilde{\varrho}$ and \( t \geq  \tilde{t}_{\tilde{\varrho}, R}\).
Hence, by \eqref{eq3.10-01} we obtain that
\begin{align}\label{eq3.11-1}
&\mathbb{E}\|x(t + \theta, x_0, p_0) - (T_{t,0}(x_0 e^{-\varrho\cdot}, p_0))(\theta)\|^{2} \nonumber\\
&\leq  2\mathbb{E}\bigg\|
\int_{t}^{t+\theta} a(t+\theta, s) g(x(s), \vartheta_s(p_0))ds
\bigg\|^{2} +2\mathbb{E}\bigg\|
\int_{t}^{t+\theta}a(t+\theta, s)dB^{H,\lambda}(s)
\bigg\|^{2} \nonumber\\
&\leq  \frac{2}{[\Gamma(\alpha)]^{2}}
\int_{t}^{t+\theta}\int_{t}^{t+\theta}
(t+\theta-s)^{\alpha-1}(t+\theta-r)^{\alpha-1}
\mathbb{E}\|g(x(s), \vartheta_s(p_0))\|^{2}dsdr
\nonumber\\
&+\sum_{i=1}^{d}
\frac{2(H_{i} - \frac{1}{2})^2 B(H_{i} - \frac{1}{2}, 2 - 2H_{i})}{[\Gamma(\alpha)]^{2}}
\int_{t}^{t+\theta}\int_{t}^{t+\theta}
(t+\theta-s)^{\alpha-1}(t+\theta-r)^{\alpha-1}
|s-r|^{2H_{i}-2}dsdr
\nonumber\\
&\leq \frac{2B_{R_*}^g \theta^{2\alpha}}{[\Gamma(\alpha+1)]^{2}}
+\sum_{i=1}^{d}M_{3,i}\theta^{2\alpha+2H_{i}-2}
\end{align}
for all \(\mathbb{E}\|x_0\|^{2} \leq R^{2}\), \( p_0 \in P \), $\varrho\geq \tilde{\varrho}$ and \( t \geq  \tilde{t}_{\tilde{\varrho}, R}\), where $M_{3,i}=\frac{4(H_{i}- \frac{1}{2})^2 B(H_{i}- \frac{1}{2}, 2 - 2H_{i})B(\alpha, 2H_{i}-1)}{[\Gamma(\alpha)]^{2}(2\alpha+2H_{i}-2)}$. Moreover,
\begin{align}
&\mathbb{E}\|(T_{t,0}(x_0 e^{-\varrho\cdot}, p_{0}))(\theta)
\|^{2}\nonumber\\
&\leq 2\mathbb{E}\|x(t + \theta, x_0, p_0) - (T_{t,0}(x_0 e^{-\varrho\cdot}, p_{0}))(\theta)\|^{2}
+2\mathbb{E}\|x(t + \theta, x_0, p_0)\|^{2}\nonumber\\
&\leq \frac{4B_{R_*}^g \theta^{2\alpha}}{[\Gamma(\alpha+1)]^{2}}
+2\sum_{i=1}^{d}M_{3,i}\theta^{2\alpha+2H_{i}-2}
+2R_*^{2}\nonumber
\end{align}
for all \(\mathbb{E}\|x_0\|^{2} \leq R^{2}\), \( p_0 \in P \), $\varrho\geq \tilde{\varrho}$ and \( t \geq  \tilde{t}_{\tilde{\varrho}, R}\).
\end{proof}

\subsection{A uniformly $\mathscr{D}$-absorbing set of $T$ in $\mathfrak{C}_w$}

Consider an arbitrary element $\mathcal{D} \in \mathscr{D}$. Thanks to the definition of $\mathscr{D}$, there exists a bounded subset $D\subset L^{2}(\Omega, \mathcal{F}_{0};\mathbb{R}^d)$ such that $\mathcal{D} = \{xe^{-\varrho\cdot}: x \in D\}$.
Without loss of generality, we may assume that $\sup_{x \in D} \mathbb{E}\|x\|^{2} \leq R^{2}$, with $R \geq R_*$.
For any given $x_0 \in D$ and $p_0 \in P$, we define the function  $\chi(t, \theta) := (T_{t,0}(x_0 e^{-\varrho\cdot}, p_0))(\theta)$, which satisfies $\chi(0, 0) \equiv x_0$.
Then, by using Lemma \ref{lem1.7}, we obtain the uniform (in $x_0 \in D$ and $p_0 \in P$) estimate
\begin{align}\label{eq3.12}
\mathbb{E}
\|\chi(t, \theta)\|^{2}
\leq \frac{4B_{R_*}^g N^{2\alpha}}{[\Gamma(\alpha+1)]^{2}}
+2\sum_{i=1}^{d}M_{3,i}N^{2\alpha+2H_{i}-2}
+2R_*^{2}
\end{align}
uniformly in $\theta \in [0, N]$ for all $N > 0$, $\varrho\geq \tilde{\varrho}$ and \( t \geq  \tilde{t}_{\tilde{\varrho}, R}\). Furthermore, we have
\begin{align}
&\|\chi(t, \cdot)\|^{2}_\alpha
= \mathbb{E}\|\chi(t, 0)\|^{2}
+\sum_{N=1}^\infty \frac{1}{2^N N^{2\alpha}}
\mathbb{E}\|\chi(t, \theta)\|_{N}^{2}\nonumber\\
&\leq
R_*^{2} + \sum_{N=1}^\infty \frac{1}{2^N N^{2\alpha} }
\left[\frac{4B_{R_*}^g N^{2\alpha}}{[\Gamma(\alpha+1)]^{2}}
+2\sum_{i=1}^{d}M_{3,i}N^{2\alpha+2H_{i}-2}
+2R_*^{2}\right]\nonumber
\end{align}
for all $x_0 \in D$, $p_0 \in P$, $\varrho\geq \tilde{\varrho}$ and \( t \geq  \tilde{t}_{\tilde{\varrho}, R}\). Therefore,
\begin{align}\label{eq3.12-11}
\|\chi(t, \cdot)\|^{2}_\alpha
\leq 3R_*^{2} +
\frac{4B_{R_*}^g }{[\Gamma(\alpha+1)]^{2}}
+2\sum_{i=1}^{d}M_{3,i} := \hat{R}_*^{2}
\end{align}
for all $x_0 \in D$, $p_0 \in P$, $\varrho\geq \tilde{\varrho}$ and \( t \geq  \tilde{t}_{\tilde{\varrho}, R}\).
In view of $\chi(t, \theta) = (T_{t,0}(x_0 e^{-\varrho\cdot}, p_0))(\theta)$, we obtain that for this $\mathcal{D} $, there exists a time $\tilde{t}_{\varrho, \mathcal{D}}=\tilde{t}_{\varrho, R}>0$ such that
\[
\sup_{x \in D, p \in P}
\|T_{t,0}(x e^{-\varrho\cdot}, p)\|^{2}_\alpha
\leq \hat{R}_*^{2},
\]
whenever $\varrho>\tilde{\varrho}$ and $t \geq \tilde{t}_{\tilde{\varrho}, \mathcal{D}}$.

Define the closed and bounded subset \(\mathfrak{B}^*\) of \(\mathfrak{C}_w\) by
\[
\mathfrak{B}^* := \left\{ \chi(t, \cdot) \in \mathfrak{C}_w :
\|\chi(t, \cdot)\|^{2}_\alpha
\leq \hat{R}_*^{2} \right\},
\]
where the constant $\hat{R}_*^{2}$ is defined in \eqref{eq3.12-11}. Then the set $\mathfrak{B}^*$ uniformly (in \(p_0 \in P\)) absorbs \(\mathcal{D}\) under \(T\) for all $\varrho>\tilde{\varrho}, t \geq \tilde{t}_{\tilde{\varrho},\mathcal{D}}$.
Moreover, since $\mathfrak{B}^*$ is independent of the particular choice of \(\mathcal{D}\), it constitutes a uniformly \(\mathscr{D}\)-absorbing set of \(T\).

Consequently,  \(\mathfrak{B}^* \times P\) is a \(\mathbb{D}\)-absorbing set for the mean-square semi-dynamical system \(\Pi \) in the product space \(\mathfrak{C}_w \times P\).

\subsection{Uniform \(\mathscr{D}\)-asymptotic compactness of $T$ in $\mathfrak{C}_w$}

We shall establish the uniform \(\mathscr{D}\)-asymptotic compactness of \(T\) in \(\mathfrak{C}_w\) via
\cite[Theorem 4]{Caraballo-Morillas-2014} or \cite[Lemma 16]{Wang-Sui}.
This will be done through the following steps.

\subsubsection{Equi-Lipschitz continuity of solution}

Recall that for any $\mathcal{D} \in \mathscr{D}$, there exists a bounded set $D\subset L^{2}(\Omega, \mathcal{F}_{0};\mathbb{R}^d)$ such that $\mathcal{D} = \{xe^{-\varrho\cdot} : x \in D\}$. Without loss of generality, we assume
$\sup_{x \in D} \mathbb{E}\|x\|^{2} \leq R^{2} \text{ with } R \geq R_*.$
Now consider sequences $\{x_{0,n}\}_{n \in \mathbb{N}} \subset D$ and $\{p_{0,n}\}_{n \in \mathbb{N}} \subset P$. Define $\chi_n(t, \theta) := (T_{t,0}(x_{0,n}e^{-\varrho\cdot}, p_{0,n}))(\theta)$, so that in particular
$\chi_n(0, 0) \equiv x_{0,n}.$
We shall show that the family $\{ \chi_n(t, \theta) \}_{n \in \mathbb{N}}$ is equi-Lipschitz continuous in $\theta\geq0$ for all $t \geq 0$.

\begin{lemma}
Let $D\subset L^{2}(\Omega, \mathcal{F}_{0};\mathbb{R}^d)$ be a bounded set.
For sequences $\{x_{0,n}\}_{n \in \mathbb{N}} \subset D$ and $\{p_{0,n}\}_{n \in \mathbb{N}} \subset P$, the sequence  $\{ \chi_n(t, \theta) \}_{n \in \mathbb{N}}$ defined above is equi-Lipschitz continuous in  $\theta\geq0$ for all $t \geq 0$.
\end{lemma}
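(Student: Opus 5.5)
The plan is to prove a uniform modulus-of-continuity estimate in $\theta$, with constants independent of $n$, $t$ and $\theta$. I will show that for all $t\geq0$, $\theta\geq 0$, $h\geq0$ and $n\in\mathbb{N}$,
\begin{align}\label{eq:plan-equi}
\mathbb{E}\|\chi_n(t,\theta+h)-\chi_n(t,\theta)\|^{2}\leq c\Big(h^{2}+h^{2\alpha}+\sum_{i=1}^{d}h^{2H_i+2\alpha-2}\Big),
\end{align}
where $c$ depends only on $R,\alpha,\varrho,H,B_R^g,d$. This is a H\"older-type equicontinuity bound rather than Lipschitz in the strict sense, and it is what ``equi-Lipschitz'' should be taken to mean here. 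For $h\in[0,1]$ the right-hand side is bounded by $c' h^{\gamma}$ with $\gamma=\min\{2\alpha,\,2H_i+2\alpha-2\}>0$. Since strong convergence in $L^2(\Omega,\mathcal{F};\mathbb{R}^d)$ implies weak convergence, \eqref{eq:plan-equi} also gives equicontinuity of $\{\chi_n(t,\cdot)\}$ as maps into $L^2_w(\Omega,\mathcal{F};\mathbb{R}^d)$, which is what the subsequent Arzel\`a--Ascoli type argument requires.

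First I write out the increment using definition \eqref{eq3.1} with $f=x_{0,n}e^{-\varrho\cdot}$ and $x_{fe^{\varrho\cdot}}=x(\cdot,x_{0,n},p_{0,n})$:
\begin{align*}
\chi_n(t,\theta+h)-\chi_n(t,\theta)&=x_{0,n}\big(e^{-\varrho(t+\theta+h)}-e^{-\varrho(t+\theta)}\big)\\
&\quad+\int_0^t\big[a(t+\theta+h,s)-a(t+\theta,s)\big]g(x(s),\vartheta_s(p_{0,n}))ds\\
&\quad+\int_0^t\big[a(t+\theta+h,s)-a(t+\theta,s)\big]dB^{H,\lambda}(s).
\end{align*}
I then split into three terms $K_1,K_2,K_3$ exactly as $J_1,J_2,J_3$ in the proof of Lemma \ref{lem1.6}.

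The three terms are estimated as follows.
\begin{itemize}
\item[(a)] The initial-data term satisfies $K_1\leq R^2\varrho^2h^2$, since $\mathbb{E}\|x_{0,n}\|^2\leq R^2$ and $|e^{-\varrho u}-e^{-\varrho(u+h)}|\leq \varrho h$.
\item[(b)] For the drift term I use the Cauchy--Schwarz step from $J_2$ together with the uniform bound $\mathbb{E}\|g(x(s),\vartheta_s(p_{0,n}))\|^2\leq B_R^g$ from \eqref{eq3.9}--\eqref{eq3.10}. This reduces $K_2$ to $B_R^g\big(\int_0^t|a(t+\theta+h,s)-a(t+\theta,s)|ds\big)^2$. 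After the substitution $r=t-s$, the kernel difference splits into an exponential part and a power part. The power part is $\int_0^t[(r+\theta)^{\alpha-1}-(r+\theta+h)^{\alpha-1}]dr$, which telescopes to a quantity bounded by $2h^{\alpha}/\Gamma(\alpha+1)$. The exponential part is bounded by $\varrho h\int_0^\infty r^{\alpha-1}e^{-\varrho r}dr$. Together these give $K_2\leq c(h^{2\alpha}+h^2)$.
\item[(c)] For the noise term $K_3$ I use the It\^{o} isometry \cite[(2.12)]{Zhang-Wang-Hu} and the bound $\phi_{H_i,\lambda_i}(s,r)\leq c_H|s-r|^{2H_i-2}$. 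This yields a double integral of the kernel difference against $|s-r|^{2H_i-2}$. Splitting the kernel difference into its power and exponential parts as before, the exponential part gives $c\,h^2$, using the finiteness computed in \eqref{eq3.6}. The power part, after the scaling $s\mapsto hs$ used at the end of Lemma \ref{lem1.6}, gives $c\,h^{2H_i+2\alpha-2}$.
\end{itemize}

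The main obstacle is step (c), and specifically making the constant independent of $\theta$ and $t$. For $\theta>0$ the kernel is no longer singular at $s=t$, so I would use the monotonicity of $r\mapsto r^{\alpha-1}-(r+h)^{\alpha-1}$ (it is decreasing). That lets me dominate the shifted integrand $(r+\theta)^{\alpha-1}-(r+\theta+h)^{\alpha-1}$ by the $\theta=0$ integrand. I can then extend the domain of integration to $[0,\infty)^2$, where the factor $e^{-\varrho(r+r')}$ guarantees convergence. If the scaled integral in Lemma \ref{lem1.6} is not obviously finite, I will instead bound it directly via \eqref{eq3.10-01}, applied on $[0,h]$ and on $[h,\infty)$ separately, using $r^{\alpha-1}-(r+h)^{\alpha-1}\leq (1-\alpha)h\,r^{\alpha-2}$ on $[h,\infty)$. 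Adding the bounds for $K_1$, $K_2$, $K_3$ gives \eqref{eq:plan-equi}.
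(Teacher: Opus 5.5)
Your proposal is correct and follows the paper's proof. The paper splits the increment into the same five pieces: $J_4$ for the initial datum, $J_5,J_6$ for the power and exponential parts of the drift kernel difference, and $J_7,J_8$ for the same two parts of the noise kernel difference. It then bounds them using $B_R^g$, the It\^{o} isometry with $\phi_{H_i,\lambda_i}\le c|s-r|^{2H_i-2}$, and the scaling $s\mapsto\Delta s$, arriving at the same H\"older-type estimate \eqref{eq3.13} that it calls equi-Lipschitz. Your monotonicity argument for uniformity in $\theta$ is valid, though it is not needed: substituting $u=t+\theta-s$ and enlarging the nonnegative integration domain $[\theta,t+\theta]^2$ to $[0,\infty)^2$ already suffices.
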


\begin{proof}
In view of \eqref{eq3.1} and $\chi_n(t, \theta) = (T_{t,0}(x_{0,n}e^{-\varrho\cdot}, p_{0,n}))(\theta)$, one can see that
\begin{align}
&\mathbb{E}\|\chi_n(t, \theta)-\chi_n(t, \theta+\Delta)\|^{2}
\nonumber\\
&=\mathbb{E}\|(T_{t,0}(x_{0,n} e^{-\varrho\cdot}, p_{0,n}))(\theta)
-(T_{t,0}(x_{0,n} e^{-\varrho\cdot}, p_{0,n}))(\theta+\Delta)\|^{2}
\nonumber\\
&\leq 5\mathbb{E}\|x_{0,n}\|^{2}
[e^{-\varrho(t+\theta)}-e^{-\varrho(t+\theta+\Delta)}]^{2}\nonumber\\
&+\frac{5}{[\Gamma(\alpha)]^{2}}
\mathbb{E}\bigg\|
\int_0^t[(t+\theta-s)^{\alpha-1}
-(t+\theta+\Delta-s)^{\alpha-1}]
e^{-\varrho(t+\theta-s)}
g(x(s,x_{0,n},p_{0,n}), \vartheta_s(p_{0,n}))ds\bigg\|^{2}
\nonumber\\
&+\frac{5}{[\Gamma(\alpha)]^{2}}
\mathbb{E}\bigg\|\int_0^t
(t+\theta+\Delta-s)^{\alpha-1}
[e^{-\varrho(t+\theta-s)}-e^{-\varrho(t+\theta+\Delta-s)}]
g(x(s,x_{0,n},p_{0,n}), \vartheta_s(p_{0,n}))ds\bigg\|^{2}
\nonumber\\
&+\frac{5}{[\Gamma(\alpha)]^{2}}
\mathbb{E}\bigg\|
\int_0^t[(t+\theta-s)^{\alpha-1}
-(t+\theta+\Delta-s)^{\alpha-1}]
e^{-\varrho(t+\theta-s)}dB^{H,\lambda}(s)\bigg\|^{2}
\nonumber\\
&+\frac{5}{[\Gamma(\alpha)]^{2}}
\mathbb{E}\bigg\|
\int_0^t(t+\theta+\Delta-s)^{\alpha-1}
[e^{-\varrho(t+\theta-s)}-e^{-\varrho(t+\theta+\Delta-s)}]
dB^{H,\lambda}(s)\bigg\|^{2}\nonumber\\
&:=5\mathbb{E}\|x_{0,n}\|^{2}J_{4}+
\frac{5}{[\Gamma(\alpha)]^{2}}[J_{5}+J_{6}+J_{7}+J_{8}].\nonumber
\end{align}
It is easy to obtain that
\begin{align}
J_{4}\leq \varrho^{2} e^{-2\varrho \theta}\Delta^{2}.\nonumber
\end{align}
Taking into account \eqref{eq3.10}, we have
\begin{align}
J_{5}&\leq
\int_0^t\int_0^t
[(t+\theta-s)^{\alpha-1}
-(t+\theta+\Delta-s)^{\alpha-1}]
[(t+\theta-r)^{\alpha-1}
-(t+\theta+\Delta-r)^{\alpha-1}]
\nonumber\\
&
~~\times\mathbb{E}\|g(x(s,x_{0,n},p_{0,n}), \vartheta_s(p_{0,n}))\|^{2}dsdr
\nonumber\\
&\leq B_{R}^{g}
\bigg[\int_0^t[(t+\theta-s)^{\alpha-1}
-(t+\theta+\Delta-s)^{\alpha-1}]ds\bigg]
\nonumber\\
&\leq \frac{4B_{R}^{g}\Delta^{2\alpha}}{\alpha^{2}}.\nonumber
\end{align}
Analogously, it holds that
\begin{align}
J_{6}&\leq B_{R}^{g}(1-e^{-\varrho\Delta}  )^{2}
\bigg[\int_0^t(t+\theta+\Delta-s)^{\alpha-1}
e^{-\varrho(t+\theta-s)}ds\bigg]^{2}\nonumber\\
&\leq B_{R}^{g}[\Gamma(\alpha)]^{2}\varrho^{2-2\alpha}\Delta^{2}.\nonumber
\end{align}
From the It\^{o} isometry of TFBM \cite[(2.12)]{Zhang-Wang-Hu}, we deduce that
\begin{align}
J_{7}&\leq\sum_{i=1}^{d}c_{H_{i}}\int_0^{t}\int_0^{t}
e^{-\varrho (t + \theta-s)}
e^{-\varrho (t + \theta-r)}
[(t+\theta-s)^{\alpha-1}
-
(t+\theta+\Delta-s)^{\alpha-1}]
\nonumber\\
&~~\times
[(t+\theta-r)^{\alpha-1}
-
(t+\theta+\Delta-r)^{\alpha-1}]
|s-r|^{2H_{i}-2}dsdr\nonumber\\
&\leq \sum_{i=1}^{d}c_{H_{i}}\Delta^{2\alpha+2H_{i}-2}
\int_0^{\infty}\int_0^{\infty}
e^{-\varrho \Delta(r+s)}
[s^{\alpha-1}-(s+1)^{\alpha-1}]
\nonumber\\
&~~\times
[r^{\alpha-1}-(r+1)^{\alpha-1}]
|s-r|^{2H_{i}-2}dsdr
\nonumber\\
&\leq \sum_{i=1}^{d}c_{H_{i},\varrho,\alpha}\Delta^{2\alpha+2H_{i}-2}.
\nonumber
\end{align}
Similarly, we derive
\begin{align}
J_{8}&\leq(1-e^{-\varrho\Delta})^{2}\sum_{i=1}^{d}c_{H_{i}}
\int_0^{t}\int_0^{t}
e^{-\varrho (t + \theta-s)}
e^{-\varrho (t + \theta-r)}\nonumber\\
&~~\times(t+\theta+\Delta-s)^{\alpha-1}
(t+\theta+\Delta-r)^{\alpha-1}
|s-r|^{2H_{i}-2}dsdr\nonumber\\
&\leq\Delta^{2}\sum_{i=1}^{d}c_{H_{i},\alpha,\varrho}.
\nonumber
\end{align}
Therefore, we conclude that
\begin{align}\label{eq3.13}
&\mathbb{E}\|\chi_n(t, \theta)-\chi_n(t, \theta+\Delta)\|^{2}\nonumber\\
&\leq 5\mathbb{E}\|x_{0,n}\|^{2}
\varrho^{2} e^{-2\varrho \theta}\Delta^{2}
+c_{\alpha}B_{R}^{g} \Delta^{2\alpha}\nonumber\\
&
+\sum_{i=1}^{d}c_{H_{i},\alpha,\varrho}B_{R}^{g}
\Delta^{2}+\sum_{i=1}^{d}c_{H_{i},\alpha,\varrho}
\Delta^{2\alpha+2H_{i}-2},
\end{align}
which implies that $\chi_n(t, \theta)$ is equi-Lipschitz continuous in $\theta\geq0$ for all $t \geq 0$.

\end{proof}

\subsubsection{Existence of \(\chi^*(\theta)\) for \(\theta \geq 0\)}

In view of the uniform estimate \eqref{eq3.12}, we have
\begin{align}\label{eq3.14}
\mathbb{E}
\|\chi(t, \theta)\|^{2}
\leq \frac{4B_{R_*}^g N^{2\alpha}}{[\Gamma(\alpha+1)]^{2}}
+2\sum_{i=1}^{d}M_{3,i}N^{2\alpha+2H_{i}-2}
+2R_*^{2}
\end{align}
uniformly in $\theta \in [0, N]$ for all $N > 0$, $\varrho>\tilde{\varrho}$ and $t \geq \tilde{t}_{\tilde{\varrho}, R}$. In addition, by estimate \eqref{eq3.13}, it holds that
\begin{align}
\mathbb{E}\|\chi_n(t, \theta)-\chi_n(t, \theta+\Delta)\|^{2}
&\leq 5\mathbb{E}\|x_{0,n}\|^{2}
\varrho^{2} \Delta^{2}
+c_{\alpha}B_{R}^{g} \Delta^{2\alpha}\nonumber\\
&
+\sum_{i=1}^{d}c_{H_{i},\alpha,\varrho}B_{R}^{g}
\Delta^{2}+\sum_{i=1}^{d}c_{H_{i},\alpha,\varrho}
\Delta^{2\alpha+2H_{i}-2},\nonumber
\end{align}
uniformly in $\theta \in [0, N]$ for all $N > 0$, $\varrho>\tilde{\varrho}$ and $t \geq \tilde{t}_{\tilde{\varrho}, R}$.
As a result, the sequence $\chi_n(t, \cdot)$ is equi-Lipschitz continuous and uniformly bounded on each interval $\theta \in [0, N]$.
This allows us to apply either \cite[Theorem 4]{Caraballo-Morillas-2014} or \cite[Lemma 16]{Wang-Sui} in the space $C([0, N], L^{2}_w(\Omega, \mathcal{F};\mathbb{R}^d))$ of continuous functions $f: [0, N] \to L^{2}_w(\Omega, \mathcal{F};\mathbb{R}^d)$.
Accordingly, there exist a function
$\chi^* \in C([0, N], L^{2}_w(\Omega, \mathcal{F};\mathbb{R}^d))$,
a subsequence $\{\chi_{n_{k}}\}$ of $\{\chi_{n}\}$ and a sequence $t_k \to \infty$ (as $k\to \infty$) such that
$\chi_{n_{k}}(t_k, \cdot)\rightarrow \chi^*(\cdot)$ in $C([0, N], L^{2}_w(\Omega, \mathcal{F};\mathbb{R}^d))$. More precisely, for every $\theta_k
\to \theta\in [0,N]$,
\[\chi_{n_{k}}(\theta_k):=
\chi_{n_{k}}(t_k,\theta_k)
\to \chi^*(\theta)
\]
in $L^{2}_w(\Omega, \mathcal{F}; \mathbb{R}^d)$ as $k\to \infty$.
Finally, by choosing \(N\) sufficiently large and applying a diagonal argument, we obtain that   \(\chi^*(\theta)\) is well-defined for every \(\theta \geq 0\).

From the identity $\chi_n(t, \theta) = (T_{t,0}(e^{-\varrho \cdot}x_{0,n}, p_{0,n}))(\theta)$,  we immediately derive that
for any bounded sequences \(\{x_{0,n}\}_{n \in \mathbb{N}}\subset L^{2}(\Omega, \mathcal{F}_{0}; \mathbb{R}^d)\), \(\{p_{0,n}\}_{n \in \mathbb{N}}\subset P\) and \(t_n \to \infty\), the sequence \(\{T_{t_n,0}(e^{-\varrho \cdot}x_{0,n}, p_{0,n})\}_{n \in \mathbb{N}}\) possesses a convergent subsequence such that
\[(T_{t_k,0}(e^{-\varrho \cdot}x_{0,n_{k}}, p_{0,n_{k}}))(\cdot) = \chi_{n_{k}}(t_k, \cdot) \to \chi^*(\cdot)\]
in $C([0, \infty), L^{2}_w(\Omega, \mathcal{F};\mathbb{R}^d))$. This implies that
the cocycle \(T\) is uniformly \(\mathscr{D}\)-asymptotically compact in $C([0, \infty), L^{2}_w(\Omega, \mathcal{F};\mathbb{R}^d))$.

Finally, an application of Lemma 5 in \cite{Cui-Kloeden-2024} yields the existence of the mean-square attractors.

\begin{theorem}\label{thm1.5}
Let Assumptions 1-2 hold. Then the cocycle \( T \) has a mean-square uniform \( \mathscr{D} \)-attractor \( \mathcal{A}_P \) in \( \mathfrak{C}_{w} \) given by
\[
    \mathcal{A}_P = \operatorname{cl}_{\mathfrak{C}_{w}} \left\{ \chi \in \mathfrak{C}_{w} \;\middle|\;
    \begin{aligned}
         &\exists\,\{e^{-\varrho \cdot}x_{n}\}_{n \in \mathbb{N}}  \subset \mathcal{D} \text{ for a } \mathcal{D} \in \mathscr{D},\;
        \{p_n\}_{n \in \mathbb{N}}\subset P \text{ and } \\
        &t_n \to \infty \text{ such that } T_{t_n,0}(e^{-\varrho \cdot}x_{n}, p_{n}) \to \chi \text{ in}~\mathfrak{C}_{w}
    \end{aligned}
    \right\},
\]
and the semi-group \( \Pi \) has a mean-square \( \mathbb{D} \)-attractor \( \mathbb{A} \) in \( \mathbb{X} = \mathfrak{C}_{w} \times P \) given by
\[
    \mathbb{A} = \operatorname{cl}_{\mathbb{X}} \left\{ (x, p) \in \mathbb{X} \;\middle|\;
    \begin{aligned}
        &\exists\, \{x_n\}_{n \in \mathbb{N}} \text{ bounded},\; \{p_n\}_{n \in \mathbb{N}} \subset P \text{ and } t_n \to \infty \\
        &\text{ such that } \Pi_{{t_n},0}(x_n e^{-\varrho\cdot}, p_n) \to (x, p) \text{ in}~\mathfrak{C}_{w} \times P
    \end{aligned}
    \right\}.
    \]
\end{theorem}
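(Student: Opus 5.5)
The plan is to apply the abstract existence result, Lemma 5 of \cite{Cui-Kloeden-2024}. That lemma states that a cocycle over a compact base driving system which has a uniformly $\mathscr{D}$-absorbing, closed and bounded set and is uniformly $\mathscr{D}$-asymptotically compact possesses a closed, bounded, minimal uniform $\mathscr{D}$-attractor, given by the closure of the set of limits of $T_{t_n,0}(\mathcal{D},p_n)$. Correspondingly, the skew-product semi-flow has a $\mathbb{D}$-attractor in $\mathfrak{C}_w\times P$. What has to be checked is that each hypothesis holds in our setting.

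\emph{Phase space.} I would first record that $(\mathfrak{C}_w,\|\cdot\|_\alpha)$ is complete and that $\mathbb{X}=\mathfrak{C}_w\times P$, with $d_{\mathbb{X}}=d_{\mathfrak{C}_w}+d_P$, is a complete metric space, since $P$ is compact. Completeness is what guarantees that limit points of sequences of the form $T_{t_n,0}(\cdot)$ stay inside the space.

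\emph{Cocycle structure and continuity.} Theorem \ref{thm1.1} supplies the cocycle property of $T$ over $\{\vartheta_t\}$ and the semigroup property of $\Pi$. Assumption 1 gives continuity of $\vartheta_t$, and Assumption 2 gives continuity of $(f,p)\mapsto T_{t,0}(f,p)$ on bounded sets. Strong $L^2$ continuity of $T$ implies continuity into the weak topology.

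\emph{Absorbing set.} Subsection 4.2 shows that $\mathfrak{B}^*$ is closed and bounded in $\mathfrak{C}_w$. It also shows that $\mathfrak{B}^*$ absorbs every $\mathcal{D}\in\mathscr{D}$ uniformly in $p\in P$ after the time $\tilde t_{\tilde\varrho,\mathcal{D}}$. Consequently $\mathfrak{B}^*\times P$ is $\mathbb{D}$-absorbing.

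\emph{Asymptotic compactness.} Subsection 4.3 proves the uniform $\mathscr{D}$-asymptotic compactness: for $x_n\in D$, $p_n\in P$ and $t_n\to\infty$, a subsequence of $T_{t_n,0}(e^{-\varrho\cdot}x_n,p_n)$ converges in $\mathfrak{C}_w$. For $\Pi$ the same conclusion follows by also extracting a convergent subsequence of $\vartheta_{t_n}(p_n)$, using the compactness of $P$.

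\emph{Conclusion.} With these hypotheses in place, the abstract lemma yields the following properties of the set $\mathcal{A}_P$ defined in the statement. It is nonempty, because asymptotic compactness produces at least one limit. It is closed, since it is defined as a closure. It is bounded, because every limit lies in the closed absorbing set $\mathfrak{B}^*$. It attracts uniformly, by the standard contradiction argument: if some sequence stayed $\varepsilon_0$ away from $\mathcal{A}_P$, it would have a convergent subsequence whose limit belongs to $\mathcal{A}_P$. It is minimal, because any closed uniformly attracting set must contain every such limit.

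The statement for $\mathbb{A}$ follows in the same way on $\mathbb{X}$, and the projection of $\mathbb{A}$ onto $\mathfrak{C}_w$ is $\mathcal{A}_P$.

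The main obstacle is the topology. Convergence in $\mathfrak{C}_w$, as delivered by the weak-compactness arguments, must be compatible with the metric used to measure $\operatorname{dist}$. One also needs every weak limit $\chi^*$ to satisfy $\|\chi^*\|_\alpha\le\hat R_*$, so that $\mathcal{A}_P$ is bounded. I would handle this by weak lower semicontinuity of $\mathbb{E}\|\cdot\|^2$ applied to each $\sup_{[0,N]}$ term, combined with Fatou's lemma for the series defining $\|\cdot\|_\alpha$.
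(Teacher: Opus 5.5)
Your outline is the paper's proof. The paper also invokes Lemma 5 of Cui and Kloeden, using the absorbing set $\mathfrak{B}^*\times P$ from Subsection 4.2 and the asymptotic compactness from Subsection 4.3. It uses compactness of $P$ to pass from $T$ to $\Pi$, and obtains $\mathcal{A}_P$ ``by a similar argument''. Your lower-semicontinuity/Fatou argument, which shows that pointwise weak limits $\chi^*$ satisfy $\|\chi^*\|_\alpha\le\hat R_*$, is correct, and the paper does not supply it.

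The topological issue you flag as the main obstacle is left unresolved, however, and it is a genuine gap; the paper's proof passes over it in the same way. Subsection 4.3 only yields a subsequence with $\chi_{n_k}(t_k,\theta_k)\rightharpoonup\chi^*(\theta)$ weakly in $L^2(\Omega,\mathcal{F};\mathbb{R}^d)$. That is convergence uniformly on compacts in $C([0,N],L^2_w(\Omega,\mathcal{F};\mathbb{R}^d))$. But $\operatorname{dist}_{\mathfrak{C}_w}$ is measured by $\|\cdot\|_\alpha$, which is built from $\sup_{t\in[0,N]}\mathbb{E}\|f(t)\|^2$, i.e.\ from strong second moments. Weak convergence gives no control of $\mathbb{E}\|\chi_{n_k}(t_k,\theta)-\chi^*(\theta)\|^2$, so asymptotic compactness in the metric Lemma 5 requires is not established. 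The consequences depend on how you read the convergence in the definition of $\mathcal{A}_P$:
\begin{itemize}
\item If it is read in $\|\cdot\|_\alpha$, you have not even shown $\mathcal{A}_P\neq\emptyset$.
\item If it is read weakly, your contradiction argument for attraction collapses. A sequence staying $\varepsilon_0$ away from $\mathcal{A}_P$ in $\|\cdot\|_\alpha$ may still converge weakly to a point of $\mathcal{A}_P$.
\item Minimality fails as well. A $\|\cdot\|_\alpha$-closed attracting set need not be weakly sequentially closed, so it need not contain those weak limits.
\end{itemize}
Completeness of $(\mathfrak{C}_w,\|\cdot\|_\alpha)$ does not help, because your limits come from the weak Arzel\`a--Ascoli argument, not from $\|\cdot\|_\alpha$-Cauchy sequences.

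To close the gap you have two options. One is to prove strong $L^2$ relative compactness of $\{\chi_n(t_n,\theta)\}$, which none of the estimates in Section 4 provide. The other is to take closures, distances and attraction in a topology matching the convergence you actually have. For example, use uniform-on-compacts weak convergence on the $\|\cdot\|_\alpha$-bounded set $\mathfrak{B}^*$, which is metrizable when $L^2(\Omega,\mathcal{F};\mathbb{R}^d)$ is separable, and keep $\|\cdot\|_\alpha$ only to define boundedness. In that setting your lsc/Fatou argument is exactly what shows $\mathfrak{B}^*$ is closed, and the $\omega$-limit construction goes through.
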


Indeed, since \(P\) is compact, the uniform \(\mathscr{D}\)-asymptotic compactness of \(T\) in \(\mathfrak{C}_w\) established above implies that the skew-product semi-flow \(\Pi\) is \(\mathbb{D}\)-asymptotically compact on \(\mathfrak{C}_{w} \times P\).
In addition, since \(T\) admits a closed, bounded and uniformly \(\mathscr{D}\)-absorbing set \(\mathcal{B}^*\) in \(\mathfrak{C}_{w}\), the skew-product semi-flow \(\Pi\) also possesses a closed, bounded \(\mathbb{D}\)-absorbing set given by \(\mathcal{B}^* \times P\). Consequently, by Lemma 5 in \cite{Cui-Kloeden-2024},
\(\Pi\) admits a mean-square \(\mathbb{D}\)-attractor \(\mathbb{A}\) in \(\mathfrak{C}_{w} \times P\). The existence of the mean-square uniform \(\mathscr{D}\)-attractor \(\mathcal{A}_P\) of \(T\) follows by a similar argument.

\section*{Acknowledgments}

The authors are grateful to Professor Hongyong Cui and Professor Peter Kloeden for their valuable comments and constructive suggestions, which have helped enhance the quality of this paper.

\end{document}